%% file: main.tex
\documentclass[hidelinks,onefignum,onetabnum]{siamart220329}

\input{ex_shared}

\ifpdf
\hypersetup{
	pdftitle={Energy Stability and Error Estimates for a Globally Divergence-Free DG Method of Arbitrary Order},
	pdfauthor={Yuchang Liu, Yan Jiang, and Mengping Zhang}
}
\fi

\allowdisplaybreaks
\usepackage{float}
\usepackage{mathrsfs}
\usepackage{xcolor}
\usepackage{etoolbox}
\usepackage{enumitem}
\usepackage{tikz}
\usetikzlibrary{arrows.meta,calc}
\tikzset{
	scheme/.style={font=\small,line width=.55pt,line cap=round,line join=round},
	meshline/.style={draw=black!55,line width=.45pt},
	active/.style={draw=black,line width=.9pt},
	direction/.style={-{Stealth[length=4pt,width=3pt]},line width=.65pt},
	stage/.style={draw,rounded corners=1.5pt,align=center,inner sep=5pt,
		font=\footnotesize},
	panel/.style={font=\footnotesize,align=center}
}

\AtBeginEnvironment{thebibliography}{\interlinepenalty=10000\relax}

\begin{document}
	
	\maketitle
	
	\begin{abstract}
		We propose and analyze an arbitrary-order globally divergence-free (GDF) discontinuous Galerkin (DG) method for the two-dimensional linear magnetic induction equation on unstructured triangular meshes. To the best of our knowledge, this is the first work to provide a rigorous energy-based analysis of high-order GDF-DG methods.
        First, we incorporate the edge normal-trace evolution into a closed, strictly local semidiscrete operator acting on the elementwise magnetic field alone.
        Then, we introduce two local modifications to the continuous electric field: a weighted patch moment correction that enforces local polynomial orthogonality and a vertex-patch stabilization that damps the remaining high-order modes.
        On shape-regular, quasi-uniform periodic triangulations, we prove semidiscrete energy stability with a sufficiently large mesh-independent penalty and establish a priori error estimates for smooth solutions.
        Numerical experiments support the analysis.
	\end{abstract}
	
	\begin{keywords}
		energy stability, error estimates, discontinuous Galerkin methods, globally divergence-free methods, discrete de Rham complex, unstructured meshes
	\end{keywords}
	
	\begin{MSCcodes}
		65M60, 65M12, 65M15.
	\end{MSCcodes}

\section{Introduction}\label{sec1}

We analyze arbitrary-order globally divergence-free (GDF) discontinuous Galerkin (DG) methods on unstructured triangular meshes for the magnetic induction equation
\begin{equation}\label{eq:intro_induction}
  \partial_t\mathbf B+\hat\nabla\times E(\mathbf u,\mathbf B)=0,
  \qquad \nabla\cdot\mathbf B=0,
\end{equation}
on a two-dimensional periodic domain $\Omega$. Throughout the paper, $\mathbf u=(u_x,u_y)^{\mathsf T}$ is independent of space and time, $U=|\mathbf u|$, and
\begin{equation}\label{eq:intro_electric_field}
  E(\mathbf u,\mathbf B)=u_yB_x-u_xB_y,
  \qquad \hat\nabla\times\phi=(\partial_y\phi,-\partial_x\phi)^{\mathsf T}.
\end{equation}
For vector fields, we write $\hat\nabla\times\mathbf w=\partial_xw_y-\partial_yw_x$. This subsystem of ideal magnetohydrodynamics (MHD) isolates magnetic transport from the nonlinear coupling of the full system. 

In particular, preserving the divergence-free constraint $\nabla\cdot\mathbf B=0$ is crucial in numerical simulations, as it prevents spurious magnetic monopoles and associated numerical instabilities~\cite{balsara2004second,li2005locally,li2012arbitrary,liu2025globally,liu2025arbitrarily}. Locally divergence-free DG bases~\cite{shu2009discontinuous,cockburn2004locally,li2005locally} enforce zero divergence within each cell but leave normal jumps across interfaces. Global divergence freedom requires an $H(\operatorname{div})$-conforming field~\cite{brezzi1985two,arnold2006finite,boffi2013mixed}; standard Galerkin formulations on this space couple shared degrees of freedom through a global mass matrix~\cite{cockburn2004locally,hu2017stable}. To retain a local, explicit update, Li and Xu~\cite{li2012arbitrary} evolved shared normal traces and reconstructed the interior field locally on overlapping Cartesian grids. This approach was developed further for non-overlapping Cartesian DG methods~\cite{fu2018globally,liu2025globally} and extended to unstructured triangular meshes~\cite{liu2025arbitrarily}.

Energy and error analyses are available for locally divergence-free or stabilized methods~\cite{besse2005convergence,heumann2013stabilized,sarkar2020priori} and globally coupled mixed formulations~\cite{hu2017stable,ding2026optimal}. For explicit reconstruction-based GDF-DG schemes, existing results include von Neumann analysis on uniform Cartesian grids~\cite{balsara2017von} and first-order stability and error analysis on overlapping Cartesian meshes~\cite{yang2016stability}. However, arbitrary-order analysis faces two obstacles: separate edge and cell evolutions obscure the underlying operator, and enforcing normal continuity introduces a reconstruction residual whose interior bubble components are not controlled by standard DG jump dissipation.

Compatible finite elements provide a useful viewpoint. Recently, Fan and Fu~\cite{fan2026structure} use a continuous electric field and an $H(\operatorname{div})$-conforming magnetic field in a structure-preserving splitting method for 2.5D ideal MHD on Cartesian meshes. Their discrete curl update preserves the divergence constraint; we use this de Rham structure~\cite{arnold2006finite,boffi2013mixed} to analyze strictly local reconstruction on triangles. \textit{To the best of our knowledge, the proposed framework provides the first rigorous energy-based analysis of high-order GDF-DG methods.}

We rewrite the edge-based method with physical $L^2$ reconstruction as a closed cellwise operator, separating the reconstruction residual from the DG operator without a global solve. Weighted patch moment corrections cancel its moments against common divergence-free patch polynomials, while vertex-patch stabilization controls the remaining components. For every $k\ge0$, we establish semidiscrete energy stability for a sufficiently large, mesh-independent penalty and, using a compatible local projection, an $L^2$ error bound of order $k+1/2$. Numerical experiments support our analysis.

\section{Proposed scheme}\label{sec2}
We reformulate the edge-based GDF update of~\cite{liu2025arbitrarily} as a closed cellwise operator and then introduce continuous electric-field corrections for stability.
\subsection{Triangulations, patches, and finite element spaces}\label{subsec:mesh_spaces}
	Let $\{\mathcal T_h\}$ be a family of conforming, uniformly shape-regular, and globally quasi-uniform periodic triangulations of $\Omega$ with edge set $\mathscr E$ and vertex set $\mathscr V$. For each closed triangle $K\in\mathcal T_h$ and edge $e\in\mathscr E$, set $h_K=\operatorname{diam}K$, $h_e=|e|$, and $h=\max_{K\in\mathcal T_h}h_K$, so that $ch\le h_K\le h$ with a constant $c>0$ independent of $h$. We assume that $h$ is sufficiently small so that every local patch
    and every fixed $\mathcal O(h)$ neighborhood used below embeds into
    $\mathbb R^2$ without periodic self-overlap.
	
	On each triangle $K\in\mathcal T_h$, let $\mathbf n_K$ and $\mathbf m_K=(-n_{K,y},n_{K,x})^{\mathsf T}$ be the outward unit normal and counterclockwise unit tangent along $\partial K$. On each shared edge $e=\partial K^-\cap\partial K^+\in\mathscr E$, we fix a unit normal $\mathbf n_e$ pointing from $K^-$ to $K^+$ and set $\mathbf m_e=(-n_{e,y},n_{e,x})^{\mathsf T}$, oriented from endpoint $v_a$ to endpoint $v_b$. The jump and average of a piecewise smooth function $w$ across $e$ are
	\[
	[w]_e=w^+-w^-,\qquad \{w\}_e=\frac12(w^-+w^+),\qquad w^\pm=(w|_{K^\pm})|_e.
	\]
	For each vertex $v\in\mathscr V$, let $\omega_v=\bigcup_{K\ni v}K$ denote its vertex patch comprising $m_v$ incident triangles, and $\lambda_v$ the continuous $\mathbb P^1$ hat function equal to $1$ at $v$ and $0$ at all other vertices. For each edge $e=(v_a,v_b)\in\mathscr E$ shared by $K^-$ and $K^+$, we define the two-cell edge patch and quadratic edge weight
	\begin{equation}\label{eq:scheme_edge_weight}
		\omega_e=K^-\cup K^+,\qquad\lambda_e=\lambda_{v_a}\lambda_{v_b}.
	\end{equation}
	Both $\lambda_v$ and $\lambda_e$ vanish on the outer boundaries $\partial\omega_v$ and $\partial\omega_e$, respectively.

	Let $\mathbb P^j(D)$ denote the space of polynomials of degree at most $j$ on $D\subset\mathbb R^2$, with the convention $\mathbb P^j(D)=\{0\}$ whenever $j<0$. When $D$ is a multi-cell patch such as $\omega_v$ or $\omega_e$, $\mathbb P^j(D)$ denotes restrictions of single polynomials on $D$ rather than broken cellwise polynomials. For a fixed integer $k\ge0$, we define the broken polynomial space, the globally divergence-free DG space, and the continuous Lagrange space by
	\begin{align}
		\mathbf V_h^k&=\{\mathbf w\in[L^2(\Omega)]^2:
		\mathbf w|_K\in[\mathbb P^k(K)]^2,\ \forall K\in\mathcal T_h\},\label{eq:alt_V}\\
		\mathbf W_h^k&=\{\mathbf w\in\mathbf V_h^k:
		\nabla\cdot\mathbf w|_K=0,\ \forall K\in\mathcal T_h,
		\ [\mathbf w\cdot\mathbf n_e]_e=0,\ \forall e\in\mathscr E\},\label{eq:alt_W}\\
		S_h^{k+1}&=\{\phi\in C^0(\Omega):
		\phi|_K\in\mathbb P^{k+1}(K),\ \forall K\in\mathcal T_h\}.
		\label{eq:alt_S}
	\end{align}
	Here, $\mathbf W_h^k$ is linked to the continuous stream-function space $S_h^{k+1}$ through the discrete polynomial de Rham complex~\cite[Section~3.5]{arnold2006finite}, and coincides with the divergence-free subspace of the classical $H(\operatorname{div};\Omega)$-conforming Brezzi--Douglas--Marini space $\mathrm{BDM}_k(\mathcal T_h)$ for $k\ge1$ or the Raviart--Thomas space $\mathrm{RT}_0(\mathcal T_h)$ for $k=0$~\cite{brezzi1985two, boffi2013mixed}. 
    
    For a triangle $K$, let $\beta_K=\prod_{v\in K}\lambda_v|_K\in\mathbb P^3(K)$ be the cubic element bubble on $K$, and define the scalar and divergence-free bubble spaces
	\begin{equation}\label{eq:scheme_bubbles}
		\mathcal B_K^{k+1}=\beta_K\mathbb P^{k-2}(K),
		\qquad \mathbf Z_K^k=\hat\nabla\times\mathcal B_K^{k+1}.
	\end{equation}
	By the local polynomial de Rham exact sequence on $K$~\cite{arnold2006finite}, $\hat\nabla\times:\mathcal B_K^{k+1}\to\mathbf Z_K^k$ is an isomorphism onto the zero-normal-trace divergence-free polynomial space:
		\begin{equation}\label{eq:scheme_zero_normal_bubbles}
			\mathbf Z_K^k=\left\{\mathbf z\in[\mathbb P^k(K)]^2:\nabla\cdot\mathbf z=0\text{ in }K,\ \mathbf z\cdot\mathbf n_K=0\text{ on }\partial K\right\}.
		\end{equation}
	We denote by $Q_K:[L^2(K)]^2\to\mathbf Z_K^k$ the physical $L^2$-orthogonal projection onto $\mathbf Z_K^k$:
	\begin{equation}\label{eq:scheme_Q}
		(Q_K\mathbf w,\mathbf z)_K=(\mathbf w,\mathbf z)_K,
		\qquad\forall\mathbf z\in\mathbf Z_K^k.
	\end{equation}
	Throughout the paper, $(\cdot,\cdot)_K$, $(\cdot,\cdot)_e$, and $(\cdot,\cdot)_\omega$ denote the standard $L^2$ inner products on a cell $K$, an edge $e$, and a patch $\omega$, respectively, with the subscript omitted over $\Omega$.

\subsection{An equivalent formulation of the triangular GDF method}\label{subsec:scheme_equivalence}

For $\mathbf B_h\in\mathbf W_h^k$, set $E_K=E(\mathbf u,\mathbf B_h|_K)$.
Following~\cite{liu2025arbitrarily}, define the edge Lax--Friedrichs (LF) and vertex electric fields by
\begin{align}
 \widehat E_e&=\{E\}_e+\frac{U}{2}[\mathbf B_h\cdot\mathbf m_e]_e,
\label{eq:scheme_LFfield}\\
 \widetilde E_v&=\frac1{m_v}\sum_{K\ni v}E_K(v)
 +\frac{U}{m_v}\sum_{e\ni v}[\mathbf B_h\cdot\mathbf m_e]_e(v).
 \label{eq:scheme_vertexE}
\end{align}
The standard DG operator $G_h(\mathbf B_h)\in\mathbf V_h^k$ is defined by
\begin{equation}\label{eq:DG}
 (G_h(\mathbf B_h),\mathbf w)_K
 =\int_K E_K(\partial_yw_x-\partial_xw_y)\,\mathrm dK
 +\sum_{e\subset\partial K}(\widehat E_e,\mathbf w\cdot\mathbf m_K)_e,
\end{equation}
for all $\mathbf w\in[\mathbb P^k(K)]^2$. Define $E_h^0\in S_h^{k+1}$ by its edge degrees of freedom
\begin{equation}\label{eq:scheme_Etrace}
 \begin{aligned}
 E_h^0|_e(v_a)&=\widetilde E_{v_a},\qquad E_h^0|_e(v_b)=\widetilde E_{v_b},\\
 (E_h^0,q)_e&=(\widehat E_e,q)_e,\qquad\forall q\in\mathbb P^{k-1}(e),
 \end{aligned}
\end{equation}
and extend it arbitrarily into each cell. We define the baseline semidiscrete scheme by
\begin{equation}\label{eq:alt_baseline_rate}
 \begin{aligned}
 \frac{\mathrm d\mathbf B_h}{\mathrm dt}
 &=\mathcal L_h^0(\mathbf B_h),\\
 \mathcal L_h^0(\mathbf B_h)|_K
 &=-\hat\nabla\times E_h^0|_K
 +Q_K\bigl(G_h(\mathbf B_h)|_K+\hat\nabla\times E_h^0|_K\bigr).
 \end{aligned}
\end{equation}
This definition is independent of the interior extensions: for any two extensions, their difference
$\eta_K\in\mathcal B_K^{k+1}$ satisfies
$-\hat\nabla\times\eta_K+Q_K(\hat\nabla\times\eta_K)=\mathbf0$.

Since $Q_K$ has range $\mathbf Z_K^k$, \eqref{eq:alt_baseline_rate} gives
\begin{equation}\label{eq:alt_trace_bubble}
 \begin{aligned}
 \mathcal L_h^0(\mathbf B_h)\cdot\mathbf n_e
 &=-\partial_{\mathbf m_e}(E_h^0|_e),
 &&\forall e\in\mathscr E,\\
 Q_K\bigl(\mathcal L_h^0(\mathbf B_h)|_K\bigr)
 &=Q_K\bigl(G_h(\mathbf B_h)|_K\bigr),
 &&\forall K\in\mathcal T_h.
 \end{aligned}
\end{equation}
In particular, $\mathcal L_h^0$ maps $\mathbf W_h^k$ into itself.
For the forward Euler step
$\mathbf B_h^{n+1}=\mathbf B_h^n+\Delta t\,\mathcal L_h^0(\mathbf B_h^n)$,
set $b_e^n=\mathbf B_h^n\cdot\mathbf n_e$.
The first identity in~\eqref{eq:alt_trace_bubble}, integrated by parts
using~\eqref{eq:scheme_Etrace}, yields
\begin{equation}\label{eq:alt_edge0}
 (b_e^{n+1}-b_e^n,q)_e=\Delta t\bigl[
 (\widehat E_e^n,\partial_{\mathbf m_e}q)_e
 -\widetilde E_{v_b}^nq(v_b)+\widetilde E_{v_a}^nq(v_a)\bigr],\quad \forall q\in\mathbb P^k(e).
\end{equation}
The second identity gives the interior bubble relations
\begin{equation}\label{eq:alt_bubble_update}
 (\mathbf B_h^{n+1}-\mathbf B_h^n
 -\Delta t\,G_h(\mathbf B_h^n),\mathbf z)_K=0,
 \qquad\forall\mathbf z\in\mathbf Z_K^k.
\end{equation}
For the updated outward traces
$b_{K,e}^{n+1}=(\mathbf n_K\cdot\mathbf n_e)b_e^{n+1}$, define
\begin{equation}\label{eq:alt_affine}
 \mathcal A_K(b)=\{\mathbf w\in[\mathbb P^k(K)]^2:
 \nabla\cdot\mathbf w=0\text{ in }K,\ 
 \mathbf w\cdot\mathbf n_K=b_{K,e}\text{ on each }e\subset\partial K\}.
\end{equation}
It is clear that the direct update $\mathbf B_h^{n+1}|_K$ belongs to $\mathcal A_K(b_K^{n+1})$.
By~\eqref{eq:scheme_zero_normal_bubbles}, any two fields in this set
differ by an element of $\mathbf Z_K^k$. Thus~\eqref{eq:alt_bubble_update} implies that $\mathbf B_h^{n+1}|_K$ is the unique minimizer:
\begin{equation}\label{eq:alt_baseline_algorithm}
 \mathbf B_h^{n+1}|_K
 =\underset{\mathbf w\in\mathcal A_K(b_K^{n+1})}{\operatorname{argmin}}
 \frac12\|\mathbf w-\mathbf B_h^n|_K-\Delta t\,G_h(\mathbf B_h^n)|_K\|_K^2.
\end{equation}
Hence, the baseline formulation~\eqref{eq:alt_baseline_rate}
recovers the edge-based update of~\cite{liu2025arbitrarily},
with the reconstruction replaced by constrained least-squares
minimization in the physical $L^2(K)$ norm. In particular, all terms in~\eqref{eq:alt_baseline_rate} depend only
on the current magnetic field over a local stencil, so no global
mass-matrix solve is required. It also conserves $\int_\Omega\mathbf B_h\,\mathrm d\Omega$: the curl of a periodic potential
has zero global mean, and every field in $\mathbf Z_K^k$ has zero cell mean.

By the equivalent formulation \eqref{eq:alt_baseline_rate}, we can see the exact reconstruction residual is \begin{equation}\label{eq:alt_baseline_residual}
 \mathcal L_h^0(\mathbf B_h)|_K-G_h(\mathbf B_h)|_K
 =(I-Q_K)\bigl(-\hat\nabla\times E_h^0|_K-G_h(\mathbf B_h)|_K\bigr).
\end{equation}
The DG jump dissipation alone does not control this residual, motivating the corrections below.

\subsection{Electric-field corrections and stabilization}\label{subsec:alt_proposed}
We modify only the exterior curl in~\eqref{eq:alt_baseline_rate}, setting
\begin{equation}\label{eq:scheme_electric_field}
 E_h=E_h^0+E_h^{\rm mom}+E_h^{\rm stab},\qquad E_h^{\mathrm{mom}}, E_h^{\mathrm{stab}}\in S_h^{k+1},
\end{equation}
and obtain the proposed semidiscrete scheme
\begin{equation}\label{eq:scheme_final_rate}
\begin{aligned}
 \frac{\mathrm d\mathbf B_h}{\mathrm dt}&=\mathcal L_h(\mathbf B_h),\\
 \mathcal L_h(\mathbf B_h)|_K
 &=-\hat\nabla\times E_h|_K
 +Q_K\bigl(G_h(\mathbf B_h)|_K+\hat\nabla\times E_h^0|_K\bigr).
 \end{aligned}
\end{equation}
The moment correction makes the local reconstruction residuals orthogonal
to degree-$k$ divergence-free polynomials; the patch stabilization damps the
remaining modes. 
\subsubsection{Continuous moment corrections}
	By integration by parts on $K$, the reconstruction residual is driven by the boundary discrepancies between the cellwise convective electric field $E_K=E(\mathbf u,\mathbf B_h|_K)$ and the baseline traces $\widehat E_e$ and $\widetilde E_v$:
	\begin{equation}\label{eq:scheme_defect_data}
		d_{K,e}=E_K|_e-\widehat E_e,
		\qquad z_{K,v}=E_K(v)-E_h^0(v).
	\end{equation}
	On each edge $e=(v_a,v_b)\subset\partial K$, we split $d_{K,e}$ into its linear endpoint interpolant and a remainder vanishing at $v_a,v_b$:
	\begin{equation}\label{eq:scheme_defect_split}
		d_{K,e}=d_{K,e}(v_a)\lambda_{v_a}|_e
		+d_{K,e}(v_b)\lambda_{v_b}|_e+r_{K,e},
		\qquad r_{K,e}(v_a)=r_{K,e}(v_b)=0.
	\end{equation}

Note that $d_{K,e}(v)\ne z_{K,v}$ in general. For each vertex $v$ of $K$, we define an auxiliary polynomial $\Theta_{K,v}\in\mathbb P^{k+1}(K)$ by
	\begin{align}
		\Theta_{K,v}(v)
		&= z_{K,v},\qquad
		\Theta_{K,v}(v')=0,\quad \forall v'\ne v,\\
		(\Theta_{K,v},q)_f
		&=
		\begin{cases}
			(d_{K,f}(v)\lambda_v,q)_f,&v\in f,\\
			0,&v\notin f,
		\end{cases}
		\qquad \forall q\in\mathbb P^{k-1}(f),\quad
		\forall f\subset\partial K,
		\label{eq:scheme_theta_vertex_trace}\\
		(\nabla\Theta_{K,v},\nabla b)_K
		&=\sum_{\substack{f\subset\partial K\\v\in f}}
		(d_{K,f}(v)\lambda_v,\partial_{\mathbf n_K}b)_f,
		\qquad \forall b\in\mathcal B_K^{k+1}.
		\label{eq:scheme_theta_vertex_bubble}
	\end{align}
	
	Similarly, for each edge $e\subset\partial K$, we define $\Theta_{K,e}\in\mathbb P^{k+1}(K)$ vanishing at all three vertices of $K$ and satisfying
	\begin{align}
		(\Theta_{K,e},q)_f
		&=\begin{cases}(r_{K,e},q)_e,&f=e,\\0,&f\ne e,\end{cases}
		\qquad \forall q\in\mathbb P^{k-1}(f),\quad \forall f\subset\partial K,
		\label{eq:scheme_theta_edge_trace}\\
		(\nabla\Theta_{K,e},\nabla b)_K
		&=(r_{K,e},\partial_{\mathbf n_K}b)_e,
		\qquad \forall b\in\mathcal B_K^{k+1}.
		\label{eq:scheme_theta_edge_bubble}
	\end{align}
	For $k\ge2$, positive definiteness of $(\nabla\cdot,\nabla\cdot)_K$ on $\mathcal B_K^{k+1}$ uniquely fixes the interior bubble components.
	
	Following the weighted local projection stabilization framework~\cite[Section~2.1]{matthies2007unified}, we project these discontinuous potentials onto continuous patch-supported fields matching their $\mathbb P^{k-1}$ moments. For each $v\in\mathscr V$ and $e=(v_a,v_b)\in\mathscr E$, we uniquely determine single patch polynomials $\psi_v\in\mathbb P^{k-1}(\omega_v)$ and $\psi_e\in\mathbb P^{k-1}(\omega_e)$ from the positive-definite weighted Gram systems
	\begin{align}
		(\lambda_v\psi_v,q)_{\omega_v}
		&=\sum_{K\ni v}(\Theta_{K,v},q)_K,
		&&\forall q\in\mathbb P^{k-1}(\omega_v),\label{eq:scheme_vertex_moments}\\
		(\lambda_e\psi_e,q)_{\omega_e}
		&=\sum_{K\in\{K^-,K^+\}}(\Theta_{K,e},q)_K,
		&&\forall q\in\mathbb P^{k-1}(\omega_e),\label{eq:scheme_edge_moments}
	\end{align}
	and assemble the continuous moment correction
	\begin{equation}\label{eq:scheme_Xi1}
		E_h^{\rm mom}=
		\sum_{v\in\mathscr V}\lambda_v\psi_v
		+\sum_{e\in\mathscr E}\lambda_e\psi_e\in S_h^{k+1}.
	\end{equation}

For $\mathbf w\in\mathbf W_h^k$, use the above construction with
$\mathbf B_h=\mathbf w$ and define the moment-corrected reconstruction residual
\begin{equation}\label{eq:en_defect}
		\mathcal R_h(\mathbf w)
		=\mathcal L_h^0(\mathbf w)
		-\hat\nabla\times E_h^{\rm mom}(\mathbf w)-G_h(\mathbf w).
	\end{equation}
For arguments common to vertex and edge patches, we use $a\in\mathscr V\sqcup\mathscr E$ to denote either a vertex $v$ or an edge $e$,
with patch quantities indexed accordingly; sums over $a$ run over this set.
For any patch $\omega_a$, denote the space of single divergence-free polynomials of degree at most $k$ on $\omega_a$ by
	\[
	\mathbf W^k(\omega_a)=\bigl\{\mathbf p\in[\mathbb P^k(\omega_a)]^2:\nabla\cdot\mathbf p=0\bigr\}.
	\]
The moment correction yields the following local decomposition and polynomial orthogonality.
\begin{lemma}\label{lem:en_sources}
For every $\mathbf w\in\mathbf W_h^k$, there exist
$\mathbf R_a(\mathbf w)\in\mathbf V_h^k$ with
$\operatorname{supp}\mathbf R_a(\mathbf w)\subset\omega_a$ such that
\begin{equation}\label{eq:en_source_decomposition}
 \mathcal R_h(\mathbf w)=\sum_{v\in\mathscr V}\mathbf R_v(\mathbf w)+\sum_{e\in\mathscr E}\mathbf R_e(\mathbf w),
\end{equation}
and
\begin{equation}\label{eq:en_source_orth}
 (\mathbf R_a(\mathbf w),\mathbf p)_{\omega_a}=0,
 \qquad\forall\mathbf p\in\mathbf W^k(\omega_a),\quad
 a\in\mathscr V\sqcup\mathscr E.
\end{equation}
\end{lemma}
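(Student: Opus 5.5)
The plan is to work one cell at a time. First I identify the reconstruction residual \eqref{eq:alt_baseline_residual} on each $K$ as the curl of a discontinuous local potential assembled from the $\Theta_{K,a}$. Then I define $\mathbf R_a$ as the patch-localized difference between these potentials and the continuous correction $\lambda_a\psi_a$, and I obtain \eqref{eq:en_source_orth} by testing against curls of patch polynomials.

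\emph{Step 1 (a curl identity for $G_h$).} Let $\phi\in\mathbb P^{k+1}(K)$ and take $\mathbf w=\hat\nabla\times\phi$ in \eqref{eq:DG}. Then $\partial_yw_x-\partial_xw_y=\Delta\phi$ and $\mathbf w\cdot\mathbf m_K=-\partial_{\mathbf n_K}\phi$. Integrating by parts once and using \eqref{eq:scheme_defect_data}, I expect
\[
(G_h(\mathbf B_h)+\hat\nabla\times E_h^0,\hat\nabla\times\phi)_K
=(\nabla(E_h^0-E_K),\nabla\phi)_K+\sum_{e\subset\partial K}(d_{K,e},\partial_{\mathbf n_K}\phi)_e .
\]
Every divergence-free $\mathbf p\in[\mathbb P^k(K)]^2$ has the form $\hat\nabla\times\phi$ with $\phi\in\mathbb P^{k+1}(K)$. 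Together with \eqref{eq:scheme_zero_normal_bubbles}, this reduces all the relevant tests to scalar identities of the above type.

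\emph{Step 2 (residual as the curl of $\Theta_K$).} Set $\Theta_K=\sum_{v\in K}\Theta_{K,v}+\sum_{e\subset\partial K}\Theta_{K,e}$. First I compare degrees of freedom. By \eqref{eq:scheme_defect_split}, \eqref{eq:scheme_theta_vertex_trace} and \eqref{eq:scheme_theta_edge_trace}, the $\mathbb P^{k-1}(e)$ moments of $\Theta_K$ on $e$ equal those of $d_{K,e}$. By \eqref{eq:scheme_Etrace}, these in turn equal the moments of $(E_K-E_h^0)|_e$. The vertex values of $\Theta_K$ are $z_{K,v}$. Hence $\eta_K:=E_K-E_h^0-\Theta_K$ has vanishing vertex and edge degrees of freedom, so $\eta_K\in\mathcal B_K^{k+1}$.

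Next I test against bubbles $b\in\mathcal B_K^{k+1}$. Relations \eqref{eq:scheme_theta_vertex_bubble} and \eqref{eq:scheme_theta_edge_bubble} say that $(\nabla\Theta_K,\nabla b)_K=\sum_e(d_{K,e},\partial_{\mathbf n_K}b)_e$. Combined with Step 1 this gives $(G_h+\hat\nabla\times E_h^0+\hat\nabla\times\Theta_K,\hat\nabla\times b)_K=0$.

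From these two facts I aim to conclude that
\[
\bigl(\mathcal L_h^0-G_h\bigr)|_K=\hat\nabla\times\Theta_K\quad(\text{up to sign}),
\]
that is, $-\hat\nabla\times E_h^0-G_h$ and $\hat\nabla\times\Theta_K$ have the same image under $I-Q_K$. The residual on $K$ is then the curl of a sum of potentials, each attached to a single vertex or edge. This motivates defining
\[
\mathbf R_a|_K=\hat\nabla\times\Theta_{K,a}-\hat\nabla\times(\lambda_a\psi_a)|_K\quad\text{for }K\subset\omega_a,
\qquad \mathbf R_a=\mathbf 0\text{ off }\omega_a .
\]
Summing over $a$ recovers \eqref{eq:en_source_decomposition}, because \eqref{eq:scheme_Xi1} is exactly $\sum_a\lambda_a\psi_a$. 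Each $\mathbf R_a$ lies in $\mathbf V_h^k$ and is supported in $\omega_a$.

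\emph{Step 3 (orthogonality).} Take $\mathbf p=\hat\nabla\times\phi$ with $\phi\in\mathbb P^{k+1}(\omega_a)$ a single patch polynomial. Integrating by parts on each $K\subset\omega_a$ gives
\[
(\mathbf R_a,\mathbf p)_{\omega_a}=-\sum_{K}(\Theta_{K,a}-\lambda_a\psi_a,\Delta\phi)_K+\text{boundary terms}.
\]
Since $\Delta\phi\in\mathbb P^{k-1}(\omega_a)$, the volume terms cancel exactly by \eqref{eq:scheme_vertex_moments} and \eqref{eq:scheme_edge_moments}. The boundary terms for $\lambda_a\psi_a$ cancel because this function is continuous across interior edges of the patch, and they vanish on $\partial\omega_a$ because $\lambda_a$ vanishes there.

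\emph{Main obstacle.} The delicate part is the boundary terms for $\Theta_{K,a}$. Two kinds appear: terms on interior edges of the patch, where the two neighbouring $\Theta$'s meet, and terms on $\partial\omega_a$. They involve $\partial_{\mathbf n}\phi\in\mathbb P^k(e)$, whereas the traces of $\Theta_{K,a}$ are controlled only through their $\mathbb P^{k-1}$ moments and their vertex values.

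My first attempt is to avoid these terms altogether. I will not integrate $\Theta$ by parts. Instead I will return to the Step 1 identity applied to the single global polynomial $\phi$, so that the boundary contribution appears as $(d_{K,e},\partial_{\mathbf n_K}\phi)_e$. Along interior patch edges these terms cancel in pairs, because $\{E\}$ and $\widehat E_e$ are single-valued while $\partial_{\mathbf n_K}\phi$ changes sign between the two sides. On $\partial\omega_a$, the weights $\lambda_v$ and the remainder $r_{K,e}$ vanish where needed.

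If that bookkeeping does not close, I will instead absorb the leftover trace terms into the definition of $\mathbf R_a$. This can be done through the Step 2 identity, which fixes $\Theta_K$ only up to its bubble part. I expect this trace accounting to be where the real work lies.
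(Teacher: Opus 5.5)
There is a genuine gap: your Step 2 conclusion is false, so your $\mathbf R_a$ fails both the decomposition and the orthogonality. Integrating \eqref{eq:DG} by parts shows that $\mathbf S_K:=G_h(\mathbf w)|_K+\hat\nabla\times E_K$ is the $[\mathbb P^k(K)]^2$-lifting of the edge defects, $(\mathbf S_K,\mathbf q)_K=-\sum_{e\subset\partial K}(d_{K,e},\mathbf q\cdot\mathbf m_K)_e$. The bubble equations \eqref{eq:scheme_theta_vertex_bubble}--\eqref{eq:scheme_theta_edge_bubble} say precisely that $(\hat\nabla\times\Theta_K,\hat\nabla\times b)_K=\sum_e(d_{K,e},\partial_{\mathbf n_K}b)_e=(\mathbf S_K,\hat\nabla\times b)_K$, that is, $Q_K(\hat\nabla\times\Theta_K-\mathbf S_K)=\mathbf 0$. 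They do not give $Q_K\hat\nabla\times\Theta_K=\mathbf 0$, nor the identity with $+\hat\nabla\times\Theta_K$ that you wrote. Writing $E_K-E_h^0=\Theta_K+\eta_K$ with $\eta_K\in\mathcal B_K^{k+1}$ (your correct first observation), one gets
\[
(\mathcal L_h^0-G_h)|_K=(I-Q_K)\bigl(\hat\nabla\times\Theta_K+\hat\nabla\times\eta_K-\mathbf S_K\bigr)=\hat\nabla\times\Theta_K-\mathbf S_K ,
\]
and $\mathbf S_K\neq\mathbf 0$ whenever $\widehat E_e\ne E_K|_e$. Already for $k=0$ you have $Q_K=0$, $\Theta_K=E_K-E_h^0|_K$ and $\hat\nabla\times E_K=\mathbf 0$, so your claim would force $G_h(\mathbf w)|_K=\mathbf 0$.

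The missing idea is to split $\mathbf S_K$ by the same vertex/edge splitting \eqref{eq:scheme_defect_split} used for the $\Theta$'s. Let $\mathbf S_v|_K$ lift the data $d_{K,f}(v)\lambda_v$ on the edges $f\ni v$, let $\mathbf S_e|_K$ lift $r_{K,e}$, and set $\mathbf R_a=\hat\nabla\times\Theta_a-\mathbf S_a-\hat\nabla\times(\lambda_a\psi_a)$. The bubble equations then give $Q_K(\hat\nabla\times\Theta_{K,a}-\mathbf S_a|_K)=\mathbf 0$ term by term, and summing yields \eqref{eq:en_source_decomposition}.

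The same $\mathbf S_a$ resolves your ``main obstacle''. A degree-$(k+1)$ edge trace is fixed by its endpoint values and $\mathbb P^{k-1}$ moments. Hence $\Theta_a$ vanishes on $\partial\omega_a$, and its jump across an interior patch edge $f$ is $(d_{K^-,f}(v)-d_{K^+,f}(v))\lambda_v|_f$ (respectively $r_{K^-,e}-r_{K^+,e}$). So the trace terms from integrating $(\hat\nabla\times\Theta_a,\mathbf p)_{\omega_a}$ by parts equal exactly $(\mathbf S_a,\mathbf p)_{\omega_a}$ and are cancelled by the $-\mathbf S_a$ in $\mathbf R_a$. What remains is $(\Theta_a-\lambda_a\psi_a,\hat\nabla\times\mathbf p)_{\omega_a}=0$, by the moment equations.

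Your first-attempt cancellation cannot work, because $d_{K,e}$ is not single-valued: $d_{K^-,e}-d_{K^+,e}=E_{K^-}-E_{K^+}=u_{n,e}[\mathbf w\cdot\mathbf m_e]_e$. Your fallback of adjusting bubble components cannot absorb the leftover either, since it is the boundary lifting $\mathbf S_a$, not the curl of a bubble.
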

\begin{proof}
Assemble $\Theta_a|_K=\Theta_{K,a}$ on $\omega_a$ and extend by zero.
Define $\mathbf S_v,\mathbf S_e\in\mathbf V_h^k$, supported in
$\omega_v,\omega_e$, respectively, by
\begin{equation}\label{eq:en_boundary_liftings}
 \begin{aligned}
 (\mathbf S_v,\mathbf q)_K
 &=-\sum_{\substack{f\subset\partial K\\v\in f}}
 (d_{K,f}(v)\lambda_v,\mathbf q\cdot\mathbf m_K)_f,\qquad\forall\mathbf q\in[\mathbb P^k(K)]^2\\
 (\mathbf S_e,\mathbf q)_K
 &=-(r_{K,e},\mathbf q\cdot\mathbf m_K)_e,
 \qquad\forall\mathbf q\in[\mathbb P^k(K)]^2,
 \end{aligned}
\end{equation}
on cells $K\ni v$ and $K\supset e$, respectively. Define $\mathbf F_a$ cellwise by
$\mathbf F_a|_K=\hat\nabla\times(\Theta_a|_K)-\mathbf S_a|_K$
for every $K\in\mathcal T_h$, and set
\begin{equation}\label{eq:en_local_contribution}
 \mathbf R_a(\mathbf w)=\mathbf F_a-\hat\nabla\times(\lambda_a\psi_a).
\end{equation}
The bubble conditions~\eqref{eq:scheme_theta_vertex_bubble}
and~\eqref{eq:scheme_theta_edge_bubble} give
$Q_K\mathbf F_a=\mathbf0$.
The vertex and edge data, together with
\eqref{eq:scheme_defect_split} and~\eqref{eq:DG}, give, for some
$b_K\in\mathcal B_K^{k+1}$,
\begin{equation}\label{eq:en_potential_lifting_sums}
 \begin{aligned}
 \sum_a\Theta_a|_K&=E_K-E_h^0|_K+b_K,\\
 \sum_a\mathbf S_a|_K&=G_h(\mathbf w)|_K+\hat\nabla\times E_K.
 \end{aligned}
\end{equation}
Since $\hat\nabla\times b_K\in\mathbf Z_K^k$, taking the rotated
gradient of the first identity, subtracting the second, and
applying $I-Q_K$ yields
\begin{equation}\label{eq:en_source_sum}
 \begin{aligned}
 \sum_a\mathbf F_a|_K
 &=(I-Q_K)(-\hat\nabla\times E_h^0|_K-G_h(\mathbf w)|_K)\\
 &=\mathcal L_h^0(\mathbf w)|_K-G_h(\mathbf w)|_K.
 \end{aligned}
\end{equation}
The last equality follows from~\eqref{eq:alt_baseline_rate}.
Summing~\eqref{eq:en_local_contribution} and using
\eqref{eq:scheme_Xi1} and~\eqref{eq:en_defect} proves~\eqref{eq:en_source_decomposition}.

For orthogonality, the defining vertex values and edge moments in
\eqref{eq:scheme_theta_vertex_trace} and
\eqref{eq:scheme_theta_edge_trace} imply that $\Theta_a$ has zero
trace on $\partial\omega_a$ and satisfies the following identities
on the interior patch edges:
\begin{equation}\label{eq:en_trace_identity}
 \begin{aligned}
 \Theta_v^-|_f-\Theta_v^+|_f
 &=\bigl(d_{K^-,f}(v)-d_{K^+,f}(v)\bigr)\lambda_v|_f,
 &&f\ni v,\\
 \Theta_e^-|_e-\Theta_e^+|_e
 &=r_{K^-,e}-r_{K^+,e}.
 \end{aligned}
\end{equation}
For $\mathbf p\in\mathbf W^k(\omega_a)$, the zero boundary trace
of $\Theta_a$, \eqref{eq:en_trace_identity}, and
\eqref{eq:en_boundary_liftings} give
\[
\sum_{K\subset\omega_a}
(\Theta_a,\mathbf p\cdot\mathbf m_K)_{\partial K}
=-(\mathbf S_a,\mathbf p)_{\omega_a}.
\]
Elementwise integration by parts therefore yields
\begin{equation}\label{eq:en_source_green}
 \begin{aligned}
 (\mathbf F_a,\mathbf p)_{\omega_a}
 &=(\Theta_a,\hat\nabla\times\mathbf p)_{\omega_a}
 -\sum_{K\subset\omega_a}
  (\Theta_a,\mathbf p\cdot\mathbf m_K)_{\partial K}
 -(\mathbf S_a,\mathbf p)_{\omega_a}\\
 &=(\Theta_a,\hat\nabla\times\mathbf p)_{\omega_a},\\
 (\mathbf R_a(\mathbf w),\mathbf p)_{\omega_a}
 &=(\Theta_a-\lambda_a\psi_a,
   \hat\nabla\times\mathbf p)_{\omega_a}=0,
 \end{aligned}
\end{equation}
where the last equality follows from the zero boundary trace of
$\lambda_a\psi_a$, the moment conditions
\eqref{eq:scheme_vertex_moments}--\eqref{eq:scheme_edge_moments},
and $\hat\nabla\times\mathbf p\in\mathbb P^{k-1}(\omega_a)$.
This proves~\eqref{eq:en_source_orth}.
\end{proof}

	\subsubsection{Stabilization on vertex patches}\label{subsec:scheme_stabilization}
	Set 
	\begin{equation}\label{eq:scheme_patch_size}
		h_v=\left(\frac{2}{m_v}\sum_{K\ni v}|K|\right)^{1/2},
	\end{equation}
	and introduce the local $H_0^1(\omega_v)$ finite element space together with its $\mathbb P^{k-1}(\omega_v)$-orthogonal subspace:
	\begin{align}
		S_0^{k+1}(\omega_v)&=\{\phi\in S_h^{k+1}:\operatorname{supp}\phi\subset\omega_v\},
		\label{eq:scheme_XP}\\
		H_v&=\{\phi\in S_0^{k+1}(\omega_v):(\phi,q)_{\omega_v}=0,
		\ \forall q\in\mathbb P^{k-1}(\omega_v)\}.
		\label{eq:scheme_HP}
	\end{align}
	On each vertex patch $\omega_v$, for $\mathbf w\in \mathbf W_h^k$, we define $\zeta_v(\mathbf w)\in H_v$ by solving
	\begin{equation}\label{eq:scheme_zeta}
		(\nabla\zeta_v(\mathbf w),\nabla\phi)_{\omega_v}
		=(\mathbf w,\hat\nabla\times\phi)_{\omega_v},
		\qquad \forall\phi\in H_v,
	\end{equation}
	and define, for a dimensionless penalty parameter $\tau\ge0$ independent of $h$,
	\begin{equation}\label{eq:scheme_finalE}
		E_h^{\rm stab}(\mathbf w)=\tau\sum_{v\in\mathscr V}\frac{U}{h_v}\zeta_v(\mathbf w)\in S_h^{k+1}.
	\end{equation}

With $\mathcal P_v$ denoting the $L^2$-orthogonal projection onto
$\hat\nabla\times H_v$, the local problem~\eqref{eq:scheme_zeta} gives
$\mathcal P_v\mathbf w=\hat\nabla\times\zeta_v(\mathbf w)$ and
\begin{equation}\label{eq:scheme_stab_curl}
 \hat\nabla\times E_h^{\rm stab}(\mathbf w)
 =\tau\sum_{v\in\mathscr V}\frac{U}{h_v}\mathcal P_v\mathbf w.
\end{equation}
Testing the stabilization contribution
$-\hat\nabla\times E_h^{\rm stab}(\mathbf w)$ with $\mathbf w$, we find that each
patch contributes the nonpositive energy term
$-\tau(U/h_v)\|\mathcal P_v\mathbf w\|_{\omega_v}^2$; 
this is the local projection stabilization mechanism
of~\cite{codina2000stabilization,matthies2007unified,luo2026local}.

In particular, if $\mathbf w|_{\omega_v}=(w_x,w_y)^{\mathsf T}\in[\mathbb P^k(\omega_v)]^2$,
integration by parts and the moments defining $H_v$ give
\begin{equation}\label{eq:scheme_polynomial_consistency}
 (\mathbf w,\hat\nabla\times\phi)_{\omega_v}
 =(\partial_xw_y-\partial_yw_x,\phi)_{\omega_v}=0,
 \qquad\forall\phi\in H_v,
\end{equation}
so $\zeta_v(\mathbf w)=0$.

Moreover, a crucial property of this vertex stabilization is the following patch regularity. It implies that \textit{the vertex-patch projections $\mathcal P_v\mathbf w$ provide global control
of the local best approximation errors of $\mathbf w$ in $\mathbf W^k(\omega_v)$.}  This provides a key estimate in the energy analysis. 
The proof is highly technical and uses local polynomial multipliers, a mean-zero load
reconstruction, and a Poisson--Ritz approximation. Details are
given in Appendix~\ref{app:patch_regularity}.

\begin{lemma}[Patch regularity]\label{lem:en_regularity}
There exists a constant $C>0$, depending only on $k$, $\Omega$, and the mesh regularity bounds, such that for all $\mathbf w\in\mathbf W_h^k$,
\begin{equation}\label{eq:en_regularity}
 \sum_{v\in\mathscr V}\|\mathcal P_v\mathbf w\|_{\omega_v}^2
 \le\sum_{v\in\mathscr V}\inf_{\mathbf p\in\mathbf W^k(\omega_v)}\|\mathbf w-\mathbf p\|_{\omega_v}^2
 \le C\sum_{v\in\mathscr V}\|\mathcal P_v\mathbf w\|_{\omega_v}^2.
\end{equation}
\end{lemma}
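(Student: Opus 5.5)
The left inequality is local and I would prove it patch by patch. For $\mathbf p\in\mathbf W^k(\omega_v)$, the polynomial consistency \eqref{eq:scheme_polynomial_consistency} gives $\zeta_v(\mathbf p)=0$, hence $\mathcal P_v\mathbf p=\mathbf 0$. Since $\mathcal P_v$ is linear, $\mathcal P_v\mathbf w=\mathcal P_v(\mathbf w-\mathbf p)$. Because $\mathcal P_v$ is an $L^2$-orthogonal projection, it has norm at most one, so $\|\mathcal P_v\mathbf w\|_{\omega_v}\le\|\mathbf w-\mathbf p\|_{\omega_v}$. Taking the infimum over $\mathbf p$ and summing over $v$ proves the first inequality.

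The right inequality is the real content, and I would prove it by a local kernel (compactness) argument on reference patches, with no global coupling. Fix $v$. On $\omega_v$, the restriction $\mathbf w|_{\omega_v}$ is a piecewise-$\mathbb P^k$, elementwise divergence-free, normally continuous field, because $\mathbf w\in\mathbf W_h^k$. The patch $\omega_v$ is simply connected and $\omega_v\subset\mathbb R^2$ embeds without self-overlap. So the discrete de Rham complex gives $\mathbf w|_{\omega_v}=\hat\nabla\times\phi$ for some $\phi\in S_h^{k+1}(\omega_v)$, unique up to a constant. The main claim I would establish is local:
\[
\inf_{\mathbf p\in\mathbf W^k(\omega_v)}\|\mathbf w-\mathbf p\|_{\omega_v}\le C\|\mathcal P_v\mathbf w\|_{\omega_v}\quad(\ast)
\]
for every such field. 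If $(\ast)$ holds, summing over $v$ finishes the proof, with a constant depending only on $k$ and the shape-regularity (finitely many patch configurations up to compact parameter sets). Both sides of $(\ast)$ are seminorms on the finite-dimensional space $\hat\nabla\times S_h^{k+1}(\omega_v)$ and scale identically under affine maps. So it suffices to prove the kernel inclusion: if $\mathcal P_v\mathbf w=\mathbf0$, then $\mathbf w|_{\omega_v}$ is a single polynomial in $\mathbf W^k(\omega_v)$. A continuity/compactness argument over the compact set of shape-regular patch geometries with $m_v$ bounded then gives a uniform $C$.

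For the kernel inclusion, write $\mathbf w=\hat\nabla\times\phi$. The condition $\mathcal P_v\mathbf w=0$ means $(\nabla\phi,\nabla\chi)_{\omega_v}=0$ for all $\chi\in H_v$, i.e.\ $\phi$ is $H_v$-discretely harmonic. Equivalently, there is $q\in\mathbb P^{k-1}(\omega_v)$ with $(\nabla\phi,\nabla\chi)=(q,\chi)$ for all $\chi\in S_0^{k+1}(\omega_v)$. I would then pick a global polynomial $P\in\mathbb P^{k+1}(\omega_v)$ with $-\Delta P=q$ and set $\eta=\phi-P\in S_h^{k+1}(\omega_v)$. Then $\eta$ is discretely harmonic against all of $S_0^{k+1}(\omega_v)$, and it remains to show $\eta$ is a single polynomial.

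\textbf{Main obstacle.} As stated, that last step is false in general: discretely harmonic functions with arbitrary boundary traces are not polynomial. So the kernel argument cannot use $\omega_v$ alone. This is why the global structure, and the overlap of neighbouring patches, must enter: a field in the kernel of every $\mathcal P_v$ is constrained by all overlapping patches at once. I would therefore replace the pure kernel argument with a stronger local statement on the enlarged patch $\tilde\omega_v=\bigcup_{v'\in\omega_v}\omega_{v'}$:
\[
\inf_{\mathbf p}\|\mathbf w-\mathbf p\|_{\omega_v}^2\le C\sum_{v'\subset\tilde\omega_v}\|\mathcal P_{v'}\mathbf w\|_{\omega_{v'}}^2 .
\]
This follows by the same finite-dimensional compactness argument once the kernel is shown to be trivial, and bounded overlap then lets us sum over $v$.

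To show the enlarged kernel is trivial, I would take $\phi$ with $\mathcal P_{v'}\hat\nabla\times\phi=0$ for all $v'$ near $v$. Testing with the localized multipliers $\chi=\lambda_{v'}^2 r$, corrected into $H_{v'}$ by subtracting the $\mathbb P^{k-1}$ moments, and using partition-of-unity arguments, I expect to show that the cellwise Laplacians agree as one polynomial and that the jumps of normal derivatives vanish on the interior edges of $\omega_v$. That forces $\phi\in\mathbb P^{k+1}(\omega_v)$. I expect the bulk of the technical work, and the possible failure points, to lie here: controlling normal-derivative jumps through the moment-constrained test space $H_{v'}$, especially for small $k$ where $\mathbb P^{k-1}$ is trivial or empty.
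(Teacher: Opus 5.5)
Your first inequality is correct and is the paper's argument. The second inequality has a genuine gap. The localized estimate on $\tilde\omega_v$ that you plan to prove by compactness is false, and so is the claim that its kernel is trivial.

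\paragraph{Why the enlarged kernel is nontrivial.} Your objection to the single patch applies verbatim to $\tilde\omega_v$, and to any enlargement by a fixed number of layers. Suppose $\mathcal P_{v'}\mathbf w=\mathbf 0$ for all $v'$ in such a region. Subtract the multiplier identities of adjacent vertices $v',v''$ and test with $\lambda_{v'}\lambda_{v''}(q_{v'}-q_{v''})$. This only shows that all multipliers coincide with one polynomial $q$. After removing $P$ with $-\Delta P=q$, you are again left with a function that is discretely harmonic in the region with a free boundary trace, and it need not be a polynomial on $\omega_v$.

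\paragraph{A concrete counterexample.} Take $k=0$ and the periodic three-direction mesh obtained by cutting the squares of a uniform grid of size $h$ along one diagonal. Here $H_{v'}=\operatorname{span}\{\lambda_{v'}\}$, so $\mathcal P_{v'}(\hat\nabla\times\phi)=\mathbf 0$ exactly when $a(\phi,\lambda_{v'})=0$. The $\mathbb P^1$ stiffness matrix on this mesh is the five-point Laplacian, because each diagonal edge faces two right angles. Let $\phi\in S_h^1$ have nodal values $xy$ (origin at $v$) on a ball of radius $Rh$, and arbitrary values elsewhere on the torus. Then $a(\phi,\lambda_{v'})=0$ at every vertex whose neighbours lie in that ball. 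Yet $\phi$ is not linear on $\omega_v$: it vanishes at $v$ and at the four axial neighbours, but equals $h^2$ at $v\pm(h,h)$. So the right-hand side of your local inequality vanishes while the left-hand side does not, for every fixed $R$. In particular, the vanishing of normal-derivative jumps on interior edges of $\omega_v$ that you hope for cannot follow, since for $k=0$ each patch supplies only one scalar condition.

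\paragraph{What the paper does instead.} The bound is intrinsically global: it is a discrete $H^2$-regularity estimate for the periodic Laplacian. Only global information excludes such functions; on the torus, $a(\phi_h,\cdot)=0$ on all of $S_h^{k+1}$ forces $\phi_h$ to be constant. A kernel argument on the whole torus would not give an $h$-independent constant either. The missing idea is the paper's route, in four steps.
\begin{enumerate}
\item Glue the local multipliers into a smooth periodic load $f_0=\sum_v\chi_vq_v$ via a partition of unity. Use $\|q_v-q_{v'}\|_{\omega_e}\le Ch^{-1}(\|\mathcal P_v\mathbf w\|_{\omega_v}+\|\mathcal P_{v'}\mathbf w\|_{\omega_{v'}})$ to control $\sum_v\|q_v-f_0\|_{\omega_v}^2+h^{2k}|f_0|_{H^k}^2$.
\item Represent the remaining residual $a(\phi_h,\cdot)-(f_0,\cdot)$ by an $L^2$ Riesz function $r$ with $h\|r\|\le C(\sum_v\|\mathcal P_v\mathbf w\|_{\omega_v}^2)^{1/2}$, after mean corrections.
\item Observe that $\phi_h$ is then the Ritz projection of the periodic solution $\Phi$ of $-\Delta\Phi=f+r$.
\item Combine global elliptic regularity, the Ritz error bound, and local Bramble--Hilbert approximation of $\Phi$ on $B_{2h}(v)$. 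With $\mathbf p=\overline{\mathbf w}+\hat\nabla\times\pi$, this bounds $\sum_v\inf_{\mathbf p}\|\mathbf w-\mathbf p\|_{\omega_v}^2$.
\end{enumerate}
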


\subsection{Connection with compatible finite element methods}\label{subsec:compatible_structure}

On the torus, the discrete de Rham structure gives every
$\mathbf w\in\mathbf W_h^k$ a continuous potential representation
\begin{equation}\label{eq:compatible_potential}
 \mathbf w=\overline{\mathbf w}+\hat\nabla\times\phi_h,
 \qquad\overline{\mathbf w}=\frac1{|\Omega|}\int_\Omega\mathbf w\,\mathrm d\Omega,
 \qquad\phi_h\in S_h^{k+1},\quad\int_\Omega\phi_h\,\mathrm d\Omega=0.
\end{equation}
The constant part carries the periodic fluxes. For $\eta\in H_v$, the zero boundary trace gives
$(\overline{\mathbf w},\hat\nabla\times\eta)_{\omega_v}=0$, so
\eqref{eq:scheme_zeta} becomes
\begin{equation}\label{eq:compatible_patch_projection}
 (\nabla\zeta_v(\mathbf w),\nabla\eta)_{\omega_v}
 =(\nabla\phi_h,\nabla\eta)_{\omega_v},\qquad\forall\eta\in H_v.
\end{equation}
Hence $\zeta_v(\mathbf w)$ is the local energy projection of $\phi_h$ onto $H_v$:
the stabilization damps local energy components of the same continuous potential.

\section{Energy stability}\label{sec:energy_draft}
	We establish the semidiscrete energy stability of~\eqref{eq:scheme_final_rate}  for any fixed polynomial degree $k\ge0$. Throughout Sections~\ref{sec:energy_draft} and~\ref{sec:error}, generic constants $C>0$ depend only on $k$, $\Omega$, and the mesh regularity bounds.
	
	\subsection{Main result and sketch of the proof}\label{subsubsec:en_constant_result}
	\begin{theorem}[Energy stability]\label{thm:en_constant}
		There exists a constant $C_R>0$ such that whenever $\tau\ge C_R^2/2$, every initial datum $\mathbf B_h(0)\in\mathbf W_h^k$ determines a unique global semidiscrete solution $\mathbf B_h(t)\in\mathbf W_h^k$ of~\eqref{eq:scheme_final_rate} satisfying
		\begin{equation}\label{eq:en_main_constant}
			\frac12\frac{\mathrm d}{\mathrm dt}\|\mathbf B_h(t)\|^2\le0,
			\qquad
			\|\mathbf B_h(t)\|\le\|\mathbf B_h(0)\|,
			\qquad\forall t\ge0.
		\end{equation}
	\end{theorem}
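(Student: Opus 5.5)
We work with the algebraic splitting of the scheme. Since $\mathcal L_h(\mathbf w)=\mathcal L_h^0(\mathbf w)-\hat\nabla\times E_h^{\rm mom}(\mathbf w)-\hat\nabla\times E_h^{\rm stab}(\mathbf w)$, definition~\eqref{eq:en_defect} and~\eqref{eq:scheme_stab_curl} give, for every $\mathbf w\in\mathbf W_h^k$,
\[
(\mathcal L_h(\mathbf w),\mathbf w)=(G_h(\mathbf w),\mathbf w)+(\mathcal R_h(\mathbf w),\mathbf w)-\tau\sum_{v}\frac{U}{h_v}\|\mathcal P_v\mathbf w\|_{\omega_v}^2 .
\]
Existence, uniqueness and invariance of $\mathbf W_h^k$ are immediate. $\mathcal L_h$ is linear in $\mathbf w$: all ingredients, namely $\widehat E_e$, $\widetilde E_v$, $\Theta$, $\psi$ and $\zeta_v$, are linear. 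Its range lies in $\mathbf W_h^k$, because curls of $S_h^{k+1}$ functions and elements of $\mathbf Z_K^k$ are divergence free with continuous normal traces. Hence the system is a linear ODE on a finite-dimensional space, and it has a unique global solution. Once the energy inequality $(\mathcal L_h(\mathbf w),\mathbf w)\le0$ is established, integrating it gives the bound $\|\mathbf B_h(t)\|\le\|\mathbf B_h(0)\|$.

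\textbf{DG term.} The standard computation for the upwind/LF flux~\eqref{eq:scheme_LFfield} applies. Take $\mathbf w=\mathbf B_h$ in~\eqref{eq:DG}, integrate by parts, and use periodicity and $\nabla\cdot\mathbf w=0$ cellwise with continuous normal traces; only tangential jumps remain. This yields
\[
(G_h(\mathbf w),\mathbf w)=-\frac U2\sum_e\|[\mathbf w\cdot\mathbf m_e]_e\|_e^2 .
\]
The dissipation is exact, with no extra volume terms, because $E$ is linear with constant $\mathbf u$.

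\textbf{Residual term.} This is the main obstacle. By Lemma~\ref{lem:en_sources},
\[
(\mathcal R_h(\mathbf w),\mathbf w)=\sum_a(\mathbf R_a,\mathbf w-\mathbf p_a)_{\omega_a}
\]
for arbitrary $\mathbf p_a\in\mathbf W^k(\omega_a)$. We choose $\mathbf p_a$ to be the $L^2$ best approximation. Each edge patch $\omega_e$ is contained in $\omega_{v_a}$, so for edges we can take $\mathbf p_e=\mathbf p_{v_a}|_{\omega_e}$. The estimate then requires a local bound of the form
\begin{equation*}
\|\mathbf R_a(\mathbf w)\|_{\omega_a}\le C_R'\sqrt{U}\,h^{-1/2}\Bigl(\sum_{e'\subset\omega_a^+}\|[\mathbf w\cdot\mathbf m_{e'}]\|_{e'}^2\Bigr)^{1/2},
\end{equation*}
where $\omega_a^+$ is a slightly enlarged patch. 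We would prove this by scaling and finite-dimensionality. The data $d_{K,e}$ and $z_{K,v}$ are linear in $\mathbf w$, and they vanish when $\mathbf w$ is continuous across the relevant edges. Indeed, then $E_K|_e=\{E\}_e=\widehat E_e$, and the vertex average reproduces $E_K(v)$. Inverse and trace inequalities then bound $\Theta$, $\psi$ and $\mathbf S$ by the jumps: the Gram systems are well conditioned uniformly under shape regularity after mapping to reference patches. The delicate point is that $d_{K,e}$ involves full jumps $u_yw_x-u_xw_y$, not only tangential ones. However, normal components are continuous in $\mathbf W_h^k$, so $[E]$ is a multiple of the tangential jump, and the $z_{K,v}$ terms are likewise controlled by the tangential jumps at $v$.

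\textbf{Combining the terms.} Cauchy--Schwarz, Young, bounded overlap of patches, $h_v\simeq h$, and Lemma~\ref{lem:en_regularity} give
\[
|(\mathcal R_h\mathbf w,\mathbf w)|\le\frac U2\sum_e\|[\mathbf w\cdot\mathbf m_e]\|_e^2+\frac{C_R^2}{2}\sum_v\frac U{h_v}\|\mathcal P_v\mathbf w\|_{\omega_v}^2 .
\]
Here $C_R$ absorbs $C_R'$, the overlap count, the quasi-uniformity ratio, and the constant $C$ of~\eqref{eq:en_regularity}. Adding the DG dissipation and the stabilization term yields
\[
(\mathcal L_h\mathbf w,\mathbf w)\le(C_R^2/2-\tau)\sum_v\frac U{h_v}\|\mathcal P_v\mathbf w\|^2\le0
\]
whenever $\tau\ge C_R^2/2$, which proves the theorem.
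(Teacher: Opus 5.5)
Your proof is correct and follows the paper's argument essentially step by step. You use the same splitting $\mathcal L_h=G_h+\mathcal R_h-\tau\sum_v(U/h_v)\mathcal P_v$ and the same exact jump dissipation $-\frac12J_U^2$ for the DG term. You bound $|(\mathcal R_h(\mathbf w),\mathbf w)|\le C_RJ_U(\mathbf w)D_U(\mathbf w)$ from Lemma~\ref{lem:en_sources}, local source bounds driven by normal continuity, and Lemma~\ref{lem:en_regularity}, and your Young step is equivalent to the paper's completion of the square.

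There is one scaling slip. $\mathbf R_a$ is linear in $\mathbf u$, so the local bound must read $\|\mathbf R_a(\mathbf w)\|_{\omega_a}\le C\,U h^{-1/2}\bigl(\sum_{e'}\|[\mathbf w\cdot\mathbf m_{e'}]_{e'}\|_{e'}^2\bigr)^{1/2}$, i.e.\ $C\sqrt{U/h}\,\eta_a$ as in Lemma~\ref{lem:en_source_bounds}. Your version with $\sqrt U$ cannot hold with a $U$-independent constant, and it would not produce the weight $U/h_v$ with a $U$-independent $C_R$ in your Young step.
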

	
	The proof of Theorem~\ref{thm:en_constant} rests on decomposing the spatial operator $\mathcal L_h$ into three parts: the dissipative DG predictor $G_h$, the moment-corrected reconstruction residual $\mathcal R_h$ defined in~\eqref{eq:en_defect}, and the vertex-patch stabilization. By~\eqref{eq:scheme_stab_curl}, for $\mathbf w\in\mathbf W_h^k$ we have
	\begin{equation}\label{eq:en_decomposition}
		\mathcal L_h(\mathbf w)
		=G_h(\mathbf w)+\mathcal R_h(\mathbf w)
		-\tau\sum_{v\in\mathscr V}\frac{U}{h_v}\mathcal P_v\mathbf w.
	\end{equation}
	For clarity, we outline the proof using two seminorms. For $\mathbf w\in\mathbf W_h^k$, define the interface jump dissipation seminorm
	\begin{equation}\label{eq:en_jump}
		J_U(\mathbf w)
		=\left(U\sum_{e\in\mathscr E}
		\|[\mathbf w\cdot\mathbf m_e]_e\|_e^2\right)^{1/2}.
	\end{equation}
	With the global patch stabilization seminorm
	\begin{equation}\label{eq:en_patch_projection}
		D_U(\mathbf w)=\left(\sum_{v\in\mathscr V}
		\frac{U}{h_v}\|\mathcal P_v\mathbf w\|_{\omega_v}^2\right)^{1/2},
	\end{equation}
	projection orthogonality yields
	\begin{equation}\label{eq:en_patch_identity}
		(\mathcal P_v\mathbf w,\mathbf w)_{\omega_v}
		=\|\mathcal P_v\mathbf w\|_{\omega_v}^2
		=\|\nabla\zeta_v(\mathbf w)\|_{\omega_v}^2.
	\end{equation}

\textit{Sketch of the proof.} Since $\mathcal L_h$ is linear on the finite-dimensional space $\mathbf W_h^k$, the semidiscrete system has a unique global solution for every initial datum. Testing~\eqref{eq:en_decomposition} with $\mathbf w=\mathbf B_h$ yields three terms
	\begin{align*}\frac{1}{2}\frac{\mathrm d}{\mathrm dt}\left\| \mathbf B_h \right\|^2&=(\mathcal L_h(\mathbf B_h),\mathbf B_h)
		\\&=(G_h(\mathbf B_h),\mathbf B_h)+(\mathcal R_h(\mathbf B_h),\mathbf B_h)
		-\tau\sum_{v\in\mathscr V}\frac{U}{h_v}(\mathcal P_v\mathbf B_h,\mathbf B_h). \end{align*}
	Using~\eqref{eq:en_patch_projection} and~\eqref{eq:en_patch_identity}, the third term is
		\begin{equation}\label{eq:en_stabilization_dissipation}
			-\tau\sum\limits_{v\in\mathscr V} \frac{U}{h_v}(\mathcal P_v\mathbf B_h,\mathbf B_h)= -\tau D_U(\mathbf B_h)^2.
		\end{equation}
	For the first term, integrating \eqref{eq:DG} by parts and summing over $K\in\mathcal T_h$ gives
	\begin{equation}\label{eq:energy_dg}
		(G_h(\mathbf B_h),\mathbf B_h)=-\frac{1}{2} J_U(\mathbf B_h)^2. 
	\end{equation}
	Both~\eqref{eq:en_stabilization_dissipation} and~\eqref{eq:energy_dg} dissipate the $L^2$ energy. The main difficulty lies in estimating the cross term $(\mathcal R_h(\mathbf B_h),\mathbf B_h)$. We will prove in~\cref{lem:en_defect_bound} that
	\begin{equation} \label{eq:en_defect_bound}\left|(\mathcal R_h(\mathbf B_h),\mathbf B_h)\right|\le C_R J_U(\mathbf B_h)D_U(\mathbf B_h). \end{equation}
	For $\tau\ge C_R^2/2$, combining~\eqref{eq:en_stabilization_dissipation}, \eqref{eq:energy_dg}, and~\eqref{eq:en_defect_bound} gives
	\begin{equation}\label{eq:en_constant_energy}
		\begin{aligned}
			\frac12\frac{\mathrm d}{\mathrm dt}\|\mathbf B_h\|^2&\le -\frac{1}{2} J_U(\mathbf B_h)^2+C_R J_U(\mathbf B_h)D_U(\mathbf B_h)-\tau D_U(\mathbf B_h)^2
			\\ &=-\frac12\bigl(J_U(\mathbf B_h)-C_RD_U(\mathbf B_h)\bigr)^2-\left(\tau-\frac{C_R^2}{2}\right)D_U(\mathbf B_h)^2
			\le0.
		\end{aligned}
	\end{equation}
    This completes the proof.

To establish~\eqref{eq:en_defect_bound}, the decomposition
\eqref{eq:en_source_decomposition}, orthogonality~\eqref{eq:en_source_orth},
and Cauchy--Schwarz give, for $\mathbf w,\mathbf z\in\mathbf W_h^k$,
\begin{equation}\label{eq:aim}
 \begin{aligned}
 |(\mathcal R_h(\mathbf w),\mathbf z)|
 &\le\sum_a\|\mathbf R_a(\mathbf w)\|_{\omega_a}
 \inf_{\mathbf p\in\mathbf W^k(\omega_a)}
 \|\mathbf z-\mathbf p\|_{\omega_a}\\
 &\le\left(\sum_a\|\mathbf R_a(\mathbf w)\|_{\omega_a}^2\right)^{1/2}
 \left(\sum_a\inf_{\mathbf p\in\mathbf W^k(\omega_a)}
 \|\mathbf z-\mathbf p\|_{\omega_a}^2\right)^{1/2}.
 \end{aligned}
\end{equation}
Choose an endpoint $v_e$ of each edge $e$, so that
$\omega_e\subset\omega_{v_e}$. Since only a bounded number of edges
are assigned to each vertex, Lemma~\ref{lem:en_regularity}
directly controls the second factor:
\begin{equation}\label{eq:en_second_factor}
 \sum_a\inf_{\mathbf p\in\mathbf W^k(\omega_a)}
 \|\mathbf z-\mathbf p\|_{\omega_a}^2
 \le C\sum_{v\in\mathscr V}
 \inf_{\mathbf p\in\mathbf W^k(\omega_v)}
 \|\mathbf z-\mathbf p\|_{\omega_v}^2
 \le C\sum_{v\in\mathscr V}\|\mathcal P_v\mathbf z\|_{\omega_v}^2.
\end{equation}
The following subsection estimates the first factor.

\subsection{\texorpdfstring{Bounds for the reconstruction residual}{Bounds for the reconstruction residual}}\label{subsubsec:en_constant_local}
	We estimate the local residual contributions from Lemma~\ref{lem:en_sources}. Write $u_{n,e}=\mathbf u\cdot\mathbf n_e$. For $\mathbf w\in\mathbf W_h^k$ and $e=(v_a,v_b)\in\mathscr E$, split the tangential jump $[\mathbf w\cdot\mathbf m_e]_e$ into its linear endpoint parts and a remainder vanishing at $v_a,v_b$:
	\begin{equation}\label{eq:en_jump_parts}
		\begin{aligned}
			s_{v,e}(\mathbf w)&=[\mathbf w\cdot\mathbf m_e]_e(v)\lambda_v|_e,
			&&v\in\{v_a,v_b\},\\
			s_e(\mathbf w)&=[\mathbf w\cdot\mathbf m_e]_e-s_{v_a,e}(\mathbf w)-s_{v_b,e}(\mathbf w).
		\end{aligned}
	\end{equation}
We introduce the following weighted local jump measures:
\begin{equation}\label{eq:en_local_jump_sizes}
 \eta_v(\mathbf w)^2
 =U\sum_{f\ni v}\|s_{v,f}(\mathbf w)\|_f^2,
 \qquad
 \eta_e(\mathbf w)^2=U\|s_e(\mathbf w)\|_e^2.
\end{equation}

\begin{lemma}[Numerical-flux data bounds]\label{lem:en_flux_data}
    For every $\mathbf w\in\mathbf W_h^k$, the edge flux
    \eqref{eq:scheme_LFfield} satisfies
    \begin{equation}\label{eq:en_flux_data_bounds}
        \|d_{K,f}(v)\lambda_v\|_f
        \le U\|s_{v,f}(\mathbf w)\|_f,
        \qquad
        \|r_{K,e}\|_e\le U\|s_e(\mathbf w)\|_e.
    \end{equation}
    The vertex flux~\eqref{eq:scheme_vertexE} satisfies
    \begin{equation}\label{eq:en_vertex_admissibility}
        \left|\widetilde E_v(\mathbf w)
        -\frac1{m_v}\sum_{K\ni v}E_K(v)\right|^2
        \le C\frac Uh\,\eta_v(\mathbf w)^2,
    \end{equation}
    and hence
    \begin{equation}\label{eq:en_source_scales}
        \sum_{K\ni v}|z_{K,v}|^2
        \le C\frac Uh\,\eta_v(\mathbf w)^2.
    \end{equation}
\end{lemma}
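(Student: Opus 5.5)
The plan is to reduce everything to one algebraic identity: on each edge, the defect $d_{K,e}$ is a constant multiple of the tangential jump $[\mathbf w\cdot\mathbf m_e]_e$, with a constant bounded by $U$. The edge bounds \eqref{eq:en_flux_data_bounds} will then follow because the endpoint split \eqref{eq:scheme_defect_split} is linear. The vertex bounds will follow from the same identity together with a one-dimensional scaling argument.

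\emph{Step 1: the edge identity.} On an edge $e$, I write $\mathbf w=b_n\mathbf n_e+b_m\mathbf m_e$ with $b_n=\mathbf w\cdot\mathbf n_e$ and $b_m=\mathbf w\cdot\mathbf m_e$. Since $\mathbf m_e=(-n_{e,y},n_{e,x})^{\mathsf T}$, a direct computation from \eqref{eq:intro_electric_field} gives
\[
E(\mathbf u,\mathbf w)=u_{m,e}\,b_n-u_{n,e}\,b_m,\qquad u_{m,e}=\mathbf u\cdot\mathbf m_e .
\]
For $\mathbf w\in\mathbf W_h^k$ the normal component is continuous, $[b_n]_e=0$, so $[E]_e=-u_{n,e}[b_m]_e$.

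\emph{Step 2: the two sides of the edge.} Using $E^\mp-\{E\}_e=\mp\tfrac12[E]_e$ and \eqref{eq:scheme_LFfield}, I obtain
\[
d_{K^-,e}=\tfrac12(u_{n,e}-U)[b_m]_e,\qquad d_{K^+,e}=-\tfrac12(u_{n,e}+U)[b_m]_e .
\]
Thus $d_{K,e}=c_{K,e}[\mathbf w\cdot\mathbf m_e]_e$ with a constant $|c_{K,e}|\le U$, since $|u_{n,e}|\le U$. Because $c_{K,e}$ is constant along $e$, the decomposition \eqref{eq:scheme_defect_split} commutes with multiplication by it:
\[
d_{K,e}(v)\lambda_v|_e=c_{K,e}\,s_{v,e}(\mathbf w),\qquad r_{K,e}=c_{K,e}\,s_e(\mathbf w).
\]
Both inequalities in \eqref{eq:en_flux_data_bounds} follow immediately.

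\emph{Step 3: the vertex admissibility bound.} By \eqref{eq:scheme_vertexE}, the left side of \eqref{eq:en_vertex_admissibility} equals $\bigl|\tfrac{U}{m_v}\sum_{e\ni v}[\mathbf w\cdot\mathbf m_e]_e(v)\bigr|^2$. On a periodic mesh every vertex is interior, so exactly $m_v$ edges meet at $v$. Cauchy--Schwarz then bounds this by $\tfrac{U^2}{m_v}\sum_{e\ni v}|[\mathbf w\cdot\mathbf m_e]_e(v)|^2$. For the linear hat function on an edge, $\|\lambda_v\|_f^2=h_f/3$, so
\[
|[\mathbf w\cdot\mathbf m_f]_f(v)|^2=\frac{3}{h_f}\|s_{v,f}(\mathbf w)\|_f^2 .
\]
Quasi-uniformity gives $h_f\ge ch$, and shape regularity gives $3\le m_v\le C$. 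Combining these yields $C\,U^2h^{-1}\sum_{f\ni v}\|s_{v,f}\|_f^2=C\,(U/h)\,\eta_v(\mathbf w)^2$.

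\emph{Step 4: the source scales.} Since $E_h^0\in S_h^{k+1}$ is continuous with vertex value $\widetilde E_v$ by \eqref{eq:scheme_Etrace}, we have $z_{K,v}=E_K(v)-\widetilde E_v$. I split this as
\[
z_{K,v}=\Bigl(E_K(v)-\frac1{m_v}\sum_{K'\ni v}E_{K'}(v)\Bigr)+\Bigl(\frac1{m_v}\sum_{K'\ni v}E_{K'}(v)-\widetilde E_v\Bigr).
\]
The second term is controlled by Step 3. For the first term, any two cells $K,K'\ni v$ are connected by a chain of at most $m_v$ cells in $\omega_v$, consecutive ones sharing an edge $f\ni v$. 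Evaluating the traces at the endpoint $v$ of $f$, Step 1 gives $|E_{K_i}(v)-E_{K_{i+1}}(v)|\le U|[\mathbf w\cdot\mathbf m_f]_f(v)|$. Summing along the chain, squaring, and reusing the scaling identity of Step 3 gives the same bound $C\,(U/h)\,\eta_v(\mathbf w)^2$. Summing over the boundedly many $K\ni v$ proves \eqref{eq:en_source_scales}.

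The computations are elementary throughout. The step I expect to need the most care is Step 1: getting the orientation and sign conventions right, and noticing that normal continuity makes the jump of $E$ purely tangential with coefficient $u_{n,e}$. That observation is what allows the Lax--Friedrichs coefficient $U$ to dominate both one-sided defects, and hence what makes the constant in \eqref{eq:en_flux_data_bounds} exactly one.
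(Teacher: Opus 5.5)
Your proposal is correct and follows the paper's proof. The normal-continuity identity $[E]_f=-u_{n,f}[\mathbf w\cdot\mathbf m_f]_f$ combined with the LF flux gives the edge bounds, and Cauchy--Schwarz with $|s_{v,f}(v)|^2=3h_f^{-1}\|s_{v,f}\|_f^2$ gives the vertex bound; your chain argument around the fan is an explicit form of the discrete Poincar\'e inequality the paper invokes for $\sum_{K\ni v}|z_{K,v}|^2$.
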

\begin{proof}
    Normal continuity gives
    $E^+-E^-=-u_{n,f}[\mathbf w\cdot\mathbf m_f]_f$.
    The LF formula~\eqref{eq:scheme_LFfield}
    and $|u_{n,f}|\le U$ give~\eqref{eq:en_flux_data_bounds}
    by the endpoint splitting~\eqref{eq:scheme_defect_split}.
    Formula~\eqref{eq:scheme_vertexE}, bounded vertex valence,
    $|s_{v,f}(v)|^2=3h_f^{-1}\|s_{v,f}\|_f^2$, and
    the definition of $\eta_v$ give~\eqref{eq:en_vertex_admissibility}
    by Cauchy--Schwarz.
    For any shared $\widetilde E_v$,
    \[
        z_{K^+,v}-z_{K^-,v}=-u_{n,f}s_{v,f}(v),
        \qquad
        \frac1{m_v}\sum_{K\ni v}z_{K,v}
        =\frac1{m_v}\sum_{K\ni v}E_K(v)-\widetilde E_v.
    \]
    Hence the discrete Poincar\'e inequality on the vertex fan and
    \eqref{eq:en_vertex_admissibility} yield
    \[
            \sum_{K\ni v}|z_{K,v}|^2\le C\sum_{f\ni v}|u_{n,f}s_{v,f}(v)|^2
            +C\left|\frac1{m_v}\sum_{K\ni v}E_K(v)-\widetilde E_v\right|^2\le C\frac Uh\,\eta_v(\mathbf w)^2,
    \]
    proving~\eqref{eq:en_source_scales}.
\end{proof}

\begin{lemma}[Stability of the local corrections]\label{lem:en_correction_stability}
    The local contributions defined in
    \eqref{eq:en_local_contribution}
    satisfy
    \begin{equation}\label{eq:en_composite_source_bounds}
 \begin{aligned}
 \|\mathbf R_v(\mathbf w)\|_{\omega_v}^2
 &\le C\left(\sum_{K\ni v}|z_{K,v}|^2
 +\sum_{K\ni v}\sum_{\substack{f\subset\partial K\\v\in f}}
 h_f^{-1}\|d_{K,f}(v)\lambda_v\|_f^2\right),\\
 \|\mathbf R_e(\mathbf w)\|_{\omega_e}^2
 &\le Ch_e^{-1}\sum_{K\supset e}\|r_{K,e}\|_e^2.
 \end{aligned}
    \end{equation}
\end{lemma}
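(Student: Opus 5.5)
The plan is to bound each piece of $\mathbf R_a=\mathbf F_a-\hat\nabla\times(\lambda_a\psi_a)$, with $\mathbf F_a|_K=\hat\nabla\times(\Theta_a|_K)-\mathbf S_a|_K$, by the data of $a$ only. Everything is done by scaling to a reference configuration. The only fact used is that all operators involved are linear maps between finite-dimensional polynomial spaces on shape-regular cells or patches. Hence their norms after affine pullback depend only on $k$ and the mesh regularity bounds; bounded vertex valence gives finitely many patch shapes up to a compact family.

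First, the boundary liftings $\mathbf S_a$. Fix $K$ and take $\mathbf q=\mathbf S_a|_K$ in \eqref{eq:en_boundary_liftings}. Cauchy--Schwarz and the discrete trace inverse inequality $\|\mathbf q\|_f\le Ch_f^{-1/2}\|\mathbf q\|_K$ give
\[
\|\mathbf S_v\|_K\le C\sum_{f\ni v}h_f^{-1/2}\|d_{K,f}(v)\lambda_v\|_f,\qquad \|\mathbf S_e\|_K\le Ch_e^{-1/2}\|r_{K,e}\|_e.
\]

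Second, the potentials $\Theta_{K,a}\in\mathbb P^{k+1}(K)$. Each is determined linearly and uniquely by three kinds of data: vertex values, edge moments against $\mathbb P^{k-1}(f)$, and the bubble equations \eqref{eq:scheme_theta_vertex_bubble}--\eqref{eq:scheme_theta_edge_bubble}. Mapping to the reference triangle, I would show
\[
|\Theta_{K,v}|_{H^1(K)}\le C\Bigl(|z_{K,v}|+\sum_{f\ni v}h_f^{-1/2}\|d_{K,f}(v)\lambda_v\|_f\Bigr),
\]
and similarly for the edge potential with $h_e^{-1/2}\|r_{K,e}\|_e$. The vertex value is dimensionless in $H^1$ seminorm in 2D. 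The edge moment data scale as $h^{-1/2}\|\cdot\|_f$, after using $\|q\|_f$ versus $h_f^{1/2}$ times the reference norm. For the bubble right-hand side, I would bound $\|\partial_{\mathbf n_K}b\|_f\le Ch^{-1/2}|b|_{H^1(K)}$ by inverse estimates and then use the coercivity of $(\nabla\cdot,\nabla\cdot)_K$ on $\mathcal B_K^{k+1}$. Since $\|\hat\nabla\times\Theta\|_K=|\Theta|_{H^1(K)}$, this controls the first part of $\mathbf F_a$.

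Third, the patch correction $\hat\nabla\times(\lambda_a\psi_a)$, which I expect to be the main obstacle. We need $\|\nabla(\lambda_a\psi_a)\|_{\omega_a}\le Ch^{-1}\|\lambda_a\psi_a\|_{\omega_a}$, which holds by the inverse inequality on piecewise polynomials. We also need $\|\lambda_a\psi_a\|_{\omega_a}\le C\|\Theta_a\|_{\omega_a}$ from the weighted Gram system \eqref{eq:scheme_vertex_moments}. For the latter, test with $q=\psi_a$:
\[
\|\lambda_a^{1/2}\psi_a\|_{\omega_a}^2\le\|\Theta_a\|_{\omega_a}\|\psi_a\|_{\omega_a}.
\]
Norm equivalence $\|\psi_a\|_{\omega_a}\le C\|\lambda_a^{1/2}\psi_a\|_{\omega_a}$ holds on $\mathbb P^{k-1}(\omega_a)$ uniformly over the compact family of shape-regular patches, since $\lambda_a>0$ in the interior. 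Together with $\lambda_a\le1$, this gives the bound. The care needed is exactly this uniformity over patch geometries, which is why I would argue via compactness of the reference patch configurations.

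Finally, bound $\|\Theta_a\|_{\omega_a}\le Ch\,(\text{data})$. This follows either from the same scaling argument in $L^2$, or from a Poincaré-type estimate: $\Theta_a$ vanishes on $\partial\omega_a$, and its jumps across interior edges are controlled by \eqref{eq:en_trace_identity}. The cleanest route is the direct reference-element scaling, $\|\Theta_{K,a}\|_K\le Ch(\cdots)$, by the same argument as in the second step. Combining the three bounds and squaring with bounded valence yields \eqref{eq:en_composite_source_bounds}.
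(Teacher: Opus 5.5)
Your proposal is correct and follows essentially the same route as the paper's proof. Both arguments bound $\mathbf S_a$ by testing the lifting with itself plus an inverse trace estimate. Both control $\Theta_{K,a}$ in the scaled norms $h_K^{-1}\|\cdot\|_K+|\cdot|_{H^1(K)}$ via reference-element norm equivalence applied to the vertex values, the edge moments and the bubble equations. Finally, both bound $\hat\nabla\times(\lambda_a\psi_a)$ by $Ch^{-1}\|\Theta_a\|_{\omega_a}$ using the weighted Gram system, weighted polynomial norm equivalence on the patch, and an inverse estimate.
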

\begin{proof}
Reference-element norm equivalence and scaling give, for
$q\in\mathbb P^{k+1}(K)$,
\[
 h_K^{-2}\|q\|_K^2+\|\nabla q\|_K^2
 \le C\left(
 h_K^{-1}\|q\|_{\partial K}^2
 +\sup_{0\ne b\in\mathcal B_K^{k+1}}
 \frac{|(\nabla q,\nabla b)_K|^2}{\|\nabla b\|_K^2}
 \right).
\]
The supremum is taken to be zero when $\mathcal B_K^{k+1}=\{0\}$. 
For $q=\Theta_{K,v}$, one-dimensional polynomial norm equivalence
bounds the boundary-trace term by its vertex values and edge moments
from~\eqref{eq:scheme_theta_vertex_trace}. The bubble equation
\eqref{eq:scheme_theta_vertex_bubble} and an inverse trace estimate
bound the supremum term. Thus
\[
\begin{aligned}
 h_K^{-2}\|\Theta_{K,v}\|_K^2
 +\|\nabla\Theta_{K,v}\|_K^2
 \le C\left(
 |z_{K,v}|^2+
 \sum_{\substack{f\subset\partial K\\v\in f}}
 h_f^{-1}\|d_{K,f}(v)\lambda_v\|_f^2
 \right).
\end{aligned}
\]
Testing~\eqref{eq:en_boundary_liftings} with $\mathbf S_v$
and applying an inverse trace estimate gives
\[
 \|\mathbf S_v\|_K^2
 \le C\sum_{\substack{f\subset\partial K\\v\in f}}
 h_f^{-1}\|d_{K,f}(v)\lambda_v\|_f^2.
\]
Moreover, the moment equation~\eqref{eq:scheme_vertex_moments},
weighted polynomial norm equivalence and an inverse estimate yield
\[
 \|\hat\nabla\times(\lambda_v\psi_v)\|_{\omega_v}
 \le Ch^{-1}\|\Theta_v\|_{\omega_v}.
\]
By~\eqref{eq:en_local_contribution}, these estimates control all
three terms in $\mathbf R_v$. The same argument applies to the edge-patch terms.  Summing the cell estimates over each
patch $\omega_a$ proves
\eqref{eq:en_composite_source_bounds}.
\end{proof}

Combining Lemmas~\ref{lem:en_flux_data} and
\ref{lem:en_correction_stability} with the endpoint inverse estimate
\[
 \|s_{v,e}(\mathbf w)\|_e^2
 \le C\|[\mathbf w\cdot\mathbf m_e]_e\|_e^2,
 \qquad v\in\{v_a,v_b\}
\]
gives the following bounds.

\begin{lemma}\label{lem:en_source_bounds}
For every $\mathbf w\in\mathbf W_h^k$,
\begin{equation}\label{eq:en_source_bound}
 \begin{aligned}
 \|\mathbf R_a(\mathbf w)\|_{\omega_a}
 &\le C\sqrt{U/h}\,\eta_a(\mathbf w),
 &&a\in\mathscr V\sqcup\mathscr E,\\
 \sum_a\eta_a(\mathbf w)^2&\le C J_U(\mathbf w)^2.
 \end{aligned}
\end{equation}
\end{lemma}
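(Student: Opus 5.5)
The plan is to treat the vertex and edge cases separately, in each case feeding the bounds of Lemma~\ref{lem:en_flux_data} into Lemma~\ref{lem:en_correction_stability}, and then to prove the summed jump bound by counting edge--vertex incidences.

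\emph{Vertex patches.} By Lemma~\ref{lem:en_correction_stability}, $\|\mathbf R_v(\mathbf w)\|_{\omega_v}^2$ is bounded by two pieces.
\begin{itemize}[leftmargin=*]
\item The first piece is $\sum_{K\ni v}|z_{K,v}|^2$. By \eqref{eq:en_source_scales} it is at most $C(U/h)\eta_v(\mathbf w)^2$.
\item The second piece is $\sum_{K\ni v}\sum_{f\ni v,\,f\subset\partial K}h_f^{-1}\|d_{K,f}(v)\lambda_v\|_f^2$. The first estimate in \eqref{eq:en_flux_data_bounds} gives $\|d_{K,f}(v)\lambda_v\|_f^2\le U^2\|s_{v,f}(\mathbf w)\|_f^2$. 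Quasi-uniformity gives $h_f^{-1}\le Ch^{-1}$. Each edge $f\ni v$ is counted at most twice, once for each of its two cells, so this piece is at most $C(U^2/h)\sum_{f\ni v}\|s_{v,f}\|_f^2=C(U/h)\eta_v(\mathbf w)^2$.
\end{itemize}
Taking square roots yields the first line of \eqref{eq:en_source_bound} for $a=v$.

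\emph{Edge patches.} The second estimate of Lemma~\ref{lem:en_correction_stability}, combined with $\|r_{K,e}\|_e\le U\|s_e(\mathbf w)\|_e$ and $h_e^{-1}\le Ch^{-1}$, gives
\[
\|\mathbf R_e(\mathbf w)\|_{\omega_e}^2\le 2C\,U^2h^{-1}\|s_e(\mathbf w)\|_e^2=C\,(U/h)\,\eta_e(\mathbf w)^2 .
\]
Both cases therefore rely on the fact that each $\eta_a^2$ carries exactly one power of $U$ while the data bounds carry $U^2$. The extra factor of $U$ becomes part of the prefactor $U/h$, and nothing else is needed.

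\emph{Summed jump bound.} For the second line of \eqref{eq:en_source_bound}, I would write
\[
\sum_v\eta_v^2+\sum_e\eta_e^2=U\sum_{e=(v_a,v_b)}\Bigl(\|s_{v_a,e}\|_e^2+\|s_{v_b,e}\|_e^2+\|s_e\|_e^2\Bigr),
\]
obtained by exchanging the sums over vertices and incident edges. The endpoint inverse estimate $\|s_{v,e}\|_e^2\le C\|[\mathbf w\cdot\mathbf m_e]_e\|_e^2$ controls the two endpoint terms. The relation $s_e=[\mathbf w\cdot\mathbf m_e]_e-s_{v_a,e}-s_{v_b,e}$ together with the triangle inequality then bounds $\|s_e\|_e^2$ by a constant times $\|[\mathbf w\cdot\mathbf m_e]_e\|_e^2$. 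Summing over edges gives $CJ_U(\mathbf w)^2$.

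\emph{Expected obstacle.} The main point is the endpoint inverse estimate. It follows from $|s_{v,e}(v)|=|[\mathbf w\cdot\mathbf m_e]_e(v)|$ and the one-dimensional polynomial inverse inequality $\|p\|_{L^\infty(e)}^2\le Ch_e^{-1}\|p\|_e^2$ on $\mathbb P^k(e)$. Combined with $\|\lambda_v\|_e^2=h_e/3$, these give the $h$-independent constant. The rest of the argument is bookkeeping with the bounded vertex valence supplied by shape regularity.
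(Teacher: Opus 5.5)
Your proposal is correct and follows the paper's own argument. The paper obtains the first bound by inserting Lemma~\ref{lem:en_flux_data} (the data bounds \eqref{eq:en_flux_data_bounds} and \eqref{eq:en_source_scales}) into Lemma~\ref{lem:en_correction_stability}, and the second bound from the endpoint inverse estimate $\|s_{v,e}(\mathbf w)\|_e^2\le C\|[\mathbf w\cdot\mathbf m_e]_e\|_e^2$. You have written out the bookkeeping the paper leaves implicit: converting $U^2/h$ into the prefactor $U/h$, counting edge--vertex incidences, using the triangle inequality for $s_e$, and deriving the inverse estimate from $\|\lambda_v\|_e^2=h_e/3$.
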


\begin{remark}[Other vertex electric fields]\label{rem:en_vertex_flux}
    Although we use the two-dimensional LF vertex flux~\eqref{eq:scheme_vertexE} from~\cite{liu2025arbitrarily}, the essential property for the analysis is~\eqref{eq:en_vertex_admissibility}. Any linear vertex flux satisfying this condition is admissible, including the simple average $\widetilde E_v=m_v^{-1}\sum_{K\ni v}E_K(v)$. Meanwhile, the dissipation in the edge fluxes $\hat E$ is required in the present analysis.
\end{remark}

By Lemma~\ref{lem:en_source_bounds}, the first factor in
\eqref{eq:aim} satisfies
\begin{equation}\label{eq:firstterm}
    \left(\sum_a
    \|\mathbf R_a(\mathbf w)\|_{\omega_a}^2
    \right)^{1/2}
    \le C\sqrt{\frac Uh}\,J_U(\mathbf w),
    \qquad \forall\mathbf w\in\mathbf W_h^k.
\end{equation}
Now we can prove the following residual estimate.
	
	\begin{lemma}\label{lem:en_defect_bound}
		There exists a constant $C_R>0$, such that
			\begin{equation}\label{eq:en_general_defect_bound}
				|(\mathcal R_h(\mathbf w),\mathbf z)|
				\le C_R J_U(\mathbf w)D_U(\mathbf z),
				\qquad
				\forall\mathbf w,\mathbf z\in\mathbf W_h^k.
			\end{equation}
	\end{lemma}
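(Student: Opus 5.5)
The plan is to combine the three ingredients already assembled in this subsection: the Cauchy--Schwarz splitting \eqref{eq:aim}, the first-factor bound \eqref{eq:firstterm}, and the second-factor bound \eqref{eq:en_second_factor}. The two factors are estimated separately and then multiplied; the only remaining work is to track the mesh-size scalings so that they produce exactly the weight $U/h_v$ appearing in $D_U$.

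First, apply Lemma~\ref{lem:en_sources} to $\mathbf w$. The decomposition \eqref{eq:en_source_decomposition} gives $(\mathcal R_h(\mathbf w),\mathbf z)=\sum_a(\mathbf R_a(\mathbf w),\mathbf z)_{\omega_a}$. The orthogonality \eqref{eq:en_source_orth} lets us subtract from $\mathbf z|_{\omega_a}$ any $\mathbf p\in\mathbf W^k(\omega_a)$. This yields \eqref{eq:aim}.

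Next, bound the two factors:
\begin{itemize}
\item By \eqref{eq:firstterm}, the first factor is at most $C\sqrt{U/h}\,J_U(\mathbf w)$.
\item By \eqref{eq:en_second_factor}, which rests on Lemma~\ref{lem:en_regularity} together with the assignment of each edge patch to one endpoint vertex patch, the second factor is at most $C\bigl(\sum_v\|\mathcal P_v\mathbf z\|_{\omega_v}^2\bigr)^{1/2}$.
\end{itemize}

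Multiplying the two bounds gives
\[
|(\mathcal R_h(\mathbf w),\mathbf z)|\le C\,J_U(\mathbf w)\Bigl(\sum_{v\in\mathscr V}\frac{U}{h}\|\mathcal P_v\mathbf z\|_{\omega_v}^2\Bigr)^{1/2}.
\]
To convert $U/h$ into $U/h_v$, use shape regularity and quasi-uniformity. From \eqref{eq:scheme_patch_size}, $h_v^2$ is twice the average area of the triangles in $\omega_v$. Each such area is comparable to $h_K^2$, and $ch\le h_K\le h$, so $h_v\le C'h$ with $C'$ depending only on the mesh regularity bounds. Hence $1/h\le C'/h_v$. The sum above is then bounded by $C'D_U(\mathbf z)^2$, and the lemma follows with $C_R=C\sqrt{C'}$. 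In the degenerate case $U=0$, both sides vanish because every $\mathbf R_a$ is controlled by $\eta_a$, which carries a factor $\sqrt U$.

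The substantive content lies in the earlier lemmas, namely the patch regularity of Lemma~\ref{lem:en_regularity} and the scaling in Lemma~\ref{lem:en_source_bounds}. At this stage the only step needing care is consistency of scales. Lemma~\ref{lem:en_source_bounds} carries a factor $h^{-1/2}$ that must pair with the $h_v^{-1/2}$ inside $D_U$. This pairing relies on the global quasi-uniformity assumption, not only on local shape regularity. I would check explicitly that the constant in $h_v\simeq h$ is uniform in the mesh, and that the bounded number of edges assigned to each vertex in \eqref{eq:en_second_factor} depends only on the minimal angle.
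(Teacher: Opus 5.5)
Your proposal is correct and follows essentially the paper's route. You combine \eqref{eq:aim}, \eqref{eq:firstterm}, and the edge-to-vertex comparison with Lemma~\ref{lem:en_regularity}, then trade $U/h$ for $U/h_v$; the paper packages that last conversion as the weighted bound \eqref{eq:en_weighted_regularity}.

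One small point: $h_v\le h$ follows directly from $|K|\le\frac12 h_K^2\le\frac12 h^2$, so quasi-uniformity is not needed for that step. It enters only upstream, in the global factor $h^{-1/2}$ of Lemma~\ref{lem:en_source_bounds} and in the unweighted global form of Lemma~\ref{lem:en_regularity}.
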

	
\begin{proof}
Lemma~\ref{lem:en_regularity} and $h_v\simeq h$ also give
\begin{equation}\label{eq:en_weighted_regularity}
 \sum_{v\in\mathscr V}\frac{U}{h_v}
 \inf_{\mathbf p\in\mathbf W^k(\omega_v)}
 \|\mathbf w-\mathbf p\|_{\omega_v}^2
 \le C D_U(\mathbf w)^2,
 \qquad \mathbf w\in\mathbf W_h^k.
\end{equation}
By~\eqref{eq:aim}, \eqref{eq:firstterm}, and the edge-to-vertex
comparison in~\eqref{eq:en_second_factor},
\[
 |(\mathcal R_h(\mathbf w),\mathbf z)|
 \le C J_U(\mathbf w)
 \left(\sum_{v\in\mathscr V}\frac{U}{h_v}
 \inf_{\mathbf p\in\mathbf W^k(\omega_v)}
 \|\mathbf z-\mathbf p\|_{\omega_v}^2\right)^{1/2}
 \le C_R J_U(\mathbf w)D_U(\mathbf z).
\]
This completes the proof.
\end{proof}

	\begin{remark}
		We have numerically studied suitable choices of $\tau$ on several well-shaped meshes. For $0\le k\le 4$, we found the following penalty values suitable in these tests:
		\begin{equation}\label{eq:tau}
			(\tau_0,\tau_1,\tau_2,\tau_3,\tau_4)=(0.2,\  0.6,\ 4.6,\ 9,\ 15).
		\end{equation}
		For lower-quality meshes, these parameters may need to be re-estimated.
	\end{remark}

\section{Error estimates}\label{sec:error}
In this section, we establish an a priori error estimate for the
proposed scheme. Assume that the exact solution of
\eqref{eq:intro_induction} is periodic and divergence-free, with $\mathbf B\in C^1([0,T];[H^{k+1}(\Omega)]^2)$. The definition of $G_h$ extends to
$\prod_{K\in\mathcal T_h}[H^1(K)]^2$.
For the exact solution $\mathbf B$, consistency gives
\[
 (\partial_t\mathbf B,\mathbf z)
 =(G_h(\mathbf B),\mathbf z),
 \qquad \forall\mathbf z\in\mathbf V_h^k.
\]

\subsection{Main result}\label{subsec:err_result}

\begin{theorem}[Error estimate]\label{thm:err_main}
		Fix $k\ge0$, $T>0$, and $\delta>0$ independently of $h$. Let $C_R$ be the constant in Lemma~\ref{lem:en_defect_bound}, and suppose
		\begin{equation}\label{eq:err_tau}
			\tau\ge\frac{C_R^2}{2}+\delta.
		\end{equation}
		 Then for any initial approximation $\mathbf B_h(0)\in\mathbf W_h^k$, the semidiscrete solution $\mathbf B_h(t)$ of~\eqref{eq:scheme_final_rate} satisfies
		\begin{equation}\label{eq:err_main_bound}
			\begin{aligned}
				\sup_{0\le t\le T}\|\mathbf B(t)-\mathbf B_h(t)\|
				\le{}&\|\mathbf B(0)-\mathbf B_h(0)\|
				+Ch^{k+1}\|\mathbf B\|_{L^\infty(0,T;H^{k+1})}\\
				&+Ch^{k+1}\|\mathbf B_t\|_{L^1(0,T;H^{k+1})}+C\sqrt U\,h^{k+1/2}\|\mathbf B\|_{L^2(0,T;H^{k+1})}.
			\end{aligned}
		\end{equation}
		In particular, if $\mathbf B_0=\mathbf B(0)\in[H^{k+2}(\Omega)]^2$ and
		\begin{equation}\label{eq:err_initial_accuracy}
			\|\mathbf B_0-\mathbf B_h(0)\|
			\le C_{\rm init}h^{k+1/2}\|\mathbf B_0\|_{H^{k+1}},
		\end{equation}
		then for fixed $T$ and $\mathbf u$,
		\begin{equation}\label{eq:err_rate}
			\sup_{0\le t\le T}\|\mathbf B(t)-\mathbf B_h(t)\|
			\le C_T h^{k+1/2}\|\mathbf B_0\|_{H^{k+2}},
		\end{equation}
		where $C$ depends only on $k$, $\Omega$, the mesh regularity bounds, $\tau$, and $\delta$, and $C_T$ may additionally depend on $C_{\rm init}$, $T$, and $U$.
	\end{theorem}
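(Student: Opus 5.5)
We split the error through a compatible projection $\Pi_h\mathbf B(t)\in\mathbf W_h^k$: $\mathbf B-\mathbf B_h=\boldsymbol\rho+\boldsymbol\theta$ with $\boldsymbol\rho=\mathbf B-\Pi_h\mathbf B$ and $\boldsymbol\theta=\Pi_h\mathbf B-\mathbf B_h\in\mathbf W_h^k$. The projection is the canonical BDM (RT for $k=0$) interpolant, chosen so that it commutes with the discrete de Rham complex, preserves edge normal moments, and reproduces divergence-free polynomials on each patch. Standard estimates then give $\|\boldsymbol\rho\|\le Ch^{k+1}|\mathbf B|_{H^{k+1}}$ and $\|\boldsymbol\rho\|_{\partial K}\le Ch^{k+1/2}|\mathbf B|_{H^{k+1}(K)}$, with the same bounds for $\boldsymbol\rho_t$. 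The $\boldsymbol\rho$ contribution to \eqref{eq:err_main_bound} is handled by the triangle inequality at time $t$, which produces the $L^\infty(0,T;H^{k+1})$ term.

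For $\boldsymbol\theta$, consistency and \eqref{eq:en_decomposition} give
\[
\frac12\frac{\mathrm d}{\mathrm dt}\|\boldsymbol\theta\|^2=(\Pi_h\mathbf B_t,\boldsymbol\theta)-(\mathcal L_h(\mathbf B_h),\boldsymbol\theta)
=(\mathcal L_h(\boldsymbol\theta),\boldsymbol\theta)-(\boldsymbol\rho_t,\boldsymbol\theta)+(G_h(\mathbf B)-\mathcal L_h(\Pi_h\mathbf B),\boldsymbol\theta).
\]
The last term splits as $(G_h(\boldsymbol\rho),\boldsymbol\theta)-(\mathcal R_h(\Pi_h\mathbf B),\boldsymbol\theta)+\tau\sum_v\frac U{h_v}(\mathcal P_v\Pi_h\mathbf B,\boldsymbol\theta)$. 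Following \eqref{eq:en_constant_energy} with $\tau\ge C_R^2/2+\delta$ yields
\[
(\mathcal L_h(\boldsymbol\theta),\boldsymbol\theta)\le-\tfrac12(J_U(\boldsymbol\theta)-C_RD_U(\boldsymbol\theta))^2-\delta D_U(\boldsymbol\theta)^2 .
\]
A convex combination then gives a lower bound $c(J_U(\boldsymbol\theta)^2+D_U(\boldsymbol\theta)^2)$ on the dissipation, with $c$ depending on $\delta$ and $C_R$. This coercivity is what makes $\delta>0$ essential.

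The consistency terms are bounded as follows.
\begin{enumerate}[label=(\roman*)]
\item $(\boldsymbol\rho_t,\boldsymbol\theta)\le\|\boldsymbol\rho_t\|\|\boldsymbol\theta\|$. This yields the $L^1(0,T;H^{k+1})$ term through the usual argument of dividing by $\|\boldsymbol\theta\|$ (or a Gronwall-free integration).
\item $(G_h(\boldsymbol\rho),\boldsymbol\theta)$. Integrate by parts in \eqref{eq:DG}. Because $\boldsymbol\theta$ is divergence-free and normal-continuous, the volume term $(E(\mathbf u,\boldsymbol\rho),\hat\nabla\times\boldsymbol\theta)_K$ integrates against $\mathbb P^{k-1}$. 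It is either removed by the moment properties of $\Pi_h$ or moved to edges, leaving only face terms $\{E(\boldsymbol\rho)\}$ and $U[\boldsymbol\rho\cdot\mathbf m]$ paired with $[\boldsymbol\theta\cdot\mathbf m]$. These are bounded by $C\sqrt U h^{k+1/2}|\mathbf B|_{H^{k+1}}J_U(\boldsymbol\theta)$, which is the usual upwind-DG argument.
\item The stabilization term. Since $\mathcal P_v\mathbf p=0$ for $\mathbf p\in\mathbf W^k(\omega_v)$ by \eqref{eq:scheme_polynomial_consistency}, we have $\|\mathcal P_v\Pi_h\mathbf B\|_{\omega_v}\le\inf_{\mathbf p}\|\Pi_h\mathbf B-\mathbf p\|_{\omega_v}\le Ch^{k+1}|\mathbf B|_{H^{k+1}(\tilde\omega_v)}$. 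Cauchy--Schwarz with weight $U/h_v$ bounds it by $C\sqrt U h^{k+1/2}|\mathbf B|_{H^{k+1}}D_U(\boldsymbol\theta)$.
\item The residual term. Use \eqref{eq:aim} with $\mathbf w=\Pi_h\mathbf B$ and $\mathbf z=\boldsymbol\theta$, and Lemma~\ref{lem:en_source_bounds} with $\eta_a(\Pi_h\mathbf B)$. The tangential jumps of $\Pi_h\mathbf B$ equal those of $\Pi_h\mathbf B-\mathbf B$, so $J_U(\Pi_h\mathbf B)\le C\sqrt U h^{k+1/2}|\mathbf B|_{H^{k+1}}$. Hence this term is at most $C\sqrt{U}h^{k+1/2}|\mathbf B|_{H^{k+1}}\,D_U(\boldsymbol\theta)$ after \eqref{eq:en_weighted_regularity}.
\end{enumerate}

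Young's inequality absorbs the $J_U(\boldsymbol\theta)$ and $D_U(\boldsymbol\theta)$ factors into the coercive dissipation, leaving
\[
\frac{\mathrm d}{\mathrm dt}\|\boldsymbol\theta\|^2\le CUh^{2k+1}|\mathbf B|_{H^{k+1}}^2+2\|\boldsymbol\rho_t\|\|\boldsymbol\theta\| .
\]
Integrating in time and applying the standard lemma for $y^2\le a^2+2\int b\,y$ gives
\[
\|\boldsymbol\theta(t)\|\le\|\boldsymbol\theta(0)\|+C\|\boldsymbol\rho_t\|_{L^1}+C\sqrt Uh^{k+1/2}\|\mathbf B\|_{L^2H^{k+1}} .
\]
Then bound $\|\boldsymbol\theta(0)\|\le\|\mathbf B(0)-\mathbf B_h(0)\|+\|\boldsymbol\rho(0)\|$ to obtain \eqref{eq:err_main_bound}. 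For \eqref{eq:err_rate}, use $\mathbf B(t,\mathbf x)=\mathbf B_0(\mathbf x-\mathbf ut)$, so that $\|\mathbf B\|_{H^{k+1}}=\|\mathbf B_0\|_{H^{k+1}}$ and $\|\mathbf B_t\|_{H^{k+1}}\le U\|\mathbf B_0\|_{H^{k+2}}$. The $h^{k+1}$ terms are dominated by $h^{k+1/2}$.

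The main obstacle is step (ii), the volume part of $G_h(\boldsymbol\rho)$ on triangles, because $\mathbf u\cdot$ rotation does not commute with $\Pi_h$. The plan is to exploit the reduced test space. By \eqref{eq:compatible_potential} and the de Rham sequence, $\hat\nabla\times\boldsymbol\theta$ is the elementwise divergence-type quantity of a $\mathbb P^{k-1}$ field. BDM interior moments are taken against $\nabla\mathbb P^{k-1}$ and curl-bubbles. Combining these with the edge moments should make $(\boldsymbol\rho,\cdot)_K$ vanish on the required polynomials. If this fails, the fallback is a local $L^2$-type projection into $\mathbf W_h^k$, at the cost of a harder face estimate. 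The other fact that must be verified is the patch-polynomial reproduction in (iii): this holds for the canonical interpolant because it reproduces global $\mathbb P^k$ fields.
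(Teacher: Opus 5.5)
\textbf{The gap is in step (ii), the volume term of $(G_h(\boldsymbol\rho),\boldsymbol\theta)$.} The rest of your argument matches the paper. That includes the splitting, the error equation, the coercivity coming from $\delta>0$, the use of Lemma~\ref{lem:en_defect_bound} with $J_U(\Pi_h\mathbf B)=O(\sqrt U h^{k+1/2})$, the stabilization bound via \eqref{eq:scheme_polynomial_consistency}, and the Gronwall-free time integration. You flag step (ii) yourself, and the justification you sketch fails for $k\ge2$.

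\textbf{Why your projection does not kill the volume term.} Write $\boldsymbol\rho|_K=\hat\nabla\times\epsilon_K$. The vanishing normal moments against $\mathbb P^k(e)$ give, after fixing the constant, $\epsilon_K=0$ at the vertices and $(\epsilon_K,q)_e=0$ for $q\in\mathbb P^{k-1}(e)$. Set $g=\partial_y\theta_x-\partial_x\theta_y\in\mathbb P^{k-1}(K)$. The volume term is then
\[
(E(\mathbf u,\boldsymbol\rho),g)_K=(\mathbf u\cdot\nabla\epsilon_K,g)_K=-(\epsilon_K,\mathbf u\cdot\nabla g)_K .
\]
\begin{itemize}
\item The boundary part is already zero, so nothing can be ``moved to edges.''
\item What you need is exactly $(\epsilon_K,q)_K=0$ for all $q\in\mathbb P^{k-2}(K)$. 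For $k\le1$ the term vanishes trivially.
\item The interior moments you name do not give this. For divergence-free fields, the $\nabla\mathbb P^{k-1}$ moments are already implied by the edge moments.
\item The curl-bubble moments only say $(\nabla\epsilon_K,\nabla b)_K=0$ for $b\in\mathcal B_K^{k+1}$, i.e.\ $(\epsilon_K,\Delta b)_K=(\epsilon_K,\partial_{\mathbf n_K}b)_{\partial K}$. Here $\Delta b\in\mathbb P^{k-1}(K)$, and $\partial_{\mathbf n_K}b$ has degree $k$ on each edge, so the edge moments of $\epsilon_K$ do not annihilate it.
\item Already for $k=2$ this does not force $\int_K\epsilon_K=0$. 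The volume term then equals $-(\mathbf u\cdot\nabla g)\int_K\epsilon_K\neq0$.
\end{itemize}
Without the orthogonality, inverse estimates give only $CUh^{k}|\mathbf B|_{H^{k+1}(K)}\|\boldsymbol\theta\|_K$. This cannot be absorbed into $J_U$ or $D_U$, and it degrades the rate to $h^k$.

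So ``commutes with the de Rham complex'' is not enough by itself; what matters is which scalar interpolant $\Pi_h$ commutes with. The BDM interpolant with the degrees of freedom you list commutes with $\hat\nabla\times I'$. Here $I'$ has interior degrees of freedom $(\nabla\cdot,\nabla b)_K$, not $\mathbb P^{k-2}$-moments.

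\textbf{The missing idea is the paper's choice of projection.} The paper sets $\Pi_h\mathbf B|_K=\hat\nabla\times I_K\phi_K$, where $I_K$ fixes vertex values, $\mathbb P^{k-1}(e)$ edge moments, and $\mathbb P^{k-2}(K)$ interior moments of the stream function. Green's formula then gives, for every $\mathbf q\in[\mathbb P^{k-1}(K)]^2$,
\[
(\mathbf B-\Pi_h\mathbf B,\mathbf q)_K=(\epsilon_K,\hat\nabla\times\mathbf q)_K-\sum_{e\subset\partial K}(\epsilon_K,\mathbf q\cdot\mathbf m_K)_e=0 .
\]
This removes the volume term in one line. Equivalently, it is the BDM interpolant whose interior moments are taken against the first-kind N\'ed\'elec space of degree $k-1$, rather than against $\nabla\mathbb P^{k-1}\oplus\mathbf Z_K^k$. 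With this projection, the rest of your argument goes through as written, and the $L^2$-type fallback is unnecessary.
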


\subsection{A compatible local projection}\label{subsec:err_projection}

Following the classical DG error estimates, we use the error splitting
\begin{equation}\label{eq:err_split}
		\mathbf e_\pi=\mathbf B-\Pi_h\mathbf B,\qquad
		\mathbf e_h=\mathbf B_h-\Pi_h\mathbf B,\qquad
		\mathbf B-\mathbf B_h=\mathbf e_\pi-\mathbf e_h,
	\end{equation}
Define the projection on the elementwise divergence-free space
\begin{equation}\label{eq:err_broken_space}
			X_h^{k+1}
			=\left\{
			\mathbf B\in
			\prod_{K\in\mathcal T_h}[H^{k+1}(K)]^2:
			\nabla\cdot(\mathbf B|_K)=0
			\right\}.
		\end{equation}
It contains $\mathbf W_h^k$ and
$[H^{k+1}(\Omega)]^2\cap\ker(\nabla\cdot)$.
On each $K$, choose the zero-mean stream function
$\phi_K\in H^{k+2}(K)\hookrightarrow C^0(\overline K)$ satisfying~\cite{costabel2010bogovskii,taylor2011partial}
\begin{equation}\label{eq:err_stream_function}
			\hat\nabla\times\phi_K=\mathbf B|_K,
			\qquad
			|\phi_K|_{H^{m+1}(K)}
			\le C|\mathbf B|_{H^m(K)},
			\quad 1\le m\le k+1.
		\end{equation}
Let $I_K:H^{k+2}(K)\to\mathbb P^{k+1}(K)$ be the
$H^1$-conforming moment interpolant
\begin{equation}\label{eq:err_potential_interpolant}
		\begin{aligned}
			I_K\phi_K(v)&=\phi_K(v),
			&&\forall v\text{ a vertex of }K,\\
			(I_K\phi_K-\phi_K,q)_e&=0,
			&&\forall q\in\mathbb P^{k-1}(e),\quad
			\forall e\subset\partial K,\\
			(I_K\phi_K-\phi_K,q)_K&=0,
			&&\forall q\in\mathbb P^{k-2}(K).
		\end{aligned}
	\end{equation}
We define
\begin{equation}\label{eq:err_projection}
		(\Pi_h\mathbf B)|_K=\hat\nabla\times I_K\phi_K,
		\qquad \forall K\in\mathcal T_h.
	\end{equation}
Because $I_K(\phi_K+c)=I_K\phi_K+c$, this definition is independent
of the additive constant in $\phi_K$.
Normal continuity of $\mathbf B$ makes adjacent stream-function
traces differ by a constant; interpolation preserves that difference,
so $\Pi_h\mathbf B\in\mathbf W_h^k$.
The shared vertex values and edge moments define the conforming interpolant
$I_h:H^{k+2}(\Omega)\to S_h^{k+1}$ by
$(I_h\phi)|_K=I_K(\phi|_K)$, with the commuting diagram
\begin{equation}\label{eq:err_commuting_diagram}
		\begin{array}{ccc}
			H^{k+2}(\Omega)
			&\xrightarrow{\ \hat\nabla\times\ }&
			[H^{k+1}(\Omega)]^2\cap\ker(\nabla\cdot)
			\\[1mm]
			{\scriptstyle I_h}\big\downarrow
			&&
			\big\downarrow{\scriptstyle\Pi_h}
			\\[1mm]
			S_h^{k+1}
			&\xrightarrow{\ \hat\nabla\times\ }&
			\mathbf W_h^k.
		\end{array}
	\end{equation}

\begin{lemma}[Projection and approximation properties]\label{lem:err_projection}
		For every $\mathbf B\in X_h^{k+1}$ with continuous normal traces across $\mathscr E$, the error $\mathbf e_\pi=\mathbf B-\Pi_h\mathbf B$ satisfies the moment orthogonality conditions
		\begin{align}
			(\mathbf e_\pi,\mathbf q)_K&=0,
			&&\forall\mathbf q\in[\mathbb P^{k-1}(K)]^2,
			\quad\forall K\in\mathcal T_h,
			\label{eq:err_volume_moments}\\
			(\mathbf e_\pi\cdot\mathbf n_e,q)_e&=0,
			&&\forall q\in\mathbb P^k(e),
			\quad\forall e\in\mathscr E,
			\label{eq:err_normal_moments}
		\end{align}
		and the approximation bounds for all $K\in\mathcal T_h$:
		\begin{align}
			\|\mathbf e_\pi\|_K
			+h_K|\mathbf e_\pi|_{H^1(K)}
			&\le Ch_K^{k+1}|\mathbf B|_{H^{k+1}(K)},
			\label{eq:err_approximation}\\
			\|\mathbf e_\pi\|_{\partial K}^2
			&\le Ch_K^{2k+1}|\mathbf B|_{H^{k+1}(K)}^2.
			\label{eq:err_trace_approximation}
		\end{align}
		On a fixed mesh, every $\mathbf B\in C^1([0,T];X_h^{k+1})$ also satisfies
			\begin{equation}\label{eq:err_time_commutation}
				\partial_t(\Pi_h\mathbf B)=\Pi_h(\partial_t\mathbf B).
			\end{equation}
	\end{lemma}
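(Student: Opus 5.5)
Everything will be reduced to the stream-function interpolant $I_K$ and the identity $\mathbf e_\pi|_K=\hat\nabla\times(\phi_K-I_K\phi_K)$. Set $\rho_K=\phi_K-I_K\phi_K$; by~\eqref{eq:err_potential_interpolant}, $\rho_K$ vanishes at the vertices of $K$, has zero $\mathbb P^{k-1}(e)$ moments on each edge, and has zero $\mathbb P^{k-2}(K)$ interior moments.

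\emph{Orthogonality~\eqref{eq:err_volume_moments}.} For $\mathbf q\in[\mathbb P^{k-1}(K)]^2$, integrate by parts:
\[
(\hat\nabla\times\rho_K,\mathbf q)_K=(\rho_K,\hat\nabla\times\mathbf q)_K-(\rho_K,\mathbf q\cdot\mathbf m_K)_{\partial K},
\]
with the sign convention matching the one used in~\eqref{eq:en_source_green}. The first term vanishes because $\hat\nabla\times\mathbf q\in\mathbb P^{k-2}(K)$. The second vanishes because $\mathbf q\cdot\mathbf m_K|_e\in\mathbb P^{k-1}(e)$.

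\emph{Orthogonality~\eqref{eq:err_normal_moments}.} On an edge, $\mathbf e_\pi\cdot\mathbf n_K=\pm\partial_{\mathbf m_K}\rho_K$. For $q\in\mathbb P^k(e)$, integrate by parts in one variable:
\[
(\partial_{\mathbf m}\rho_K,q)_e=[\rho_Kq]_{v_a}^{v_b}-(\rho_K,\partial_{\mathbf m}q)_e=0,
\]
using the vertex values and edge moments. Normal continuity of $\mathbf B$ and of $\Pi_h\mathbf B\in\mathbf W_h^k$ makes $\mathbf e_\pi\cdot\mathbf n_e$ single valued, so this holds for the fixed $\mathbf n_e$.

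\emph{Time commutation~\eqref{eq:err_time_commutation}.} On a fixed mesh, $\Pi_h$ is linear and independent of $t$. It is also bounded on $X_h^{k+1}$: the map $\mathbf B|_K\mapsto\phi_K$ is a bounded linear right inverse (the zero-mean normalization makes it linear), and $I_K$ is bounded on $H^{k+2}(K)\hookrightarrow C^0$. Hence $\Pi_h$ commutes with the difference quotient, and passing to the limit in $C^1([0,T];X_h^{k+1})$ gives the claim.

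\emph{Approximation bounds.} These are the only mildly technical part, via Bramble--Hilbert. Map $K$ affinely to the reference triangle $\hat K$. Because the stream function transforms as a scalar, the rotated gradient as a rotated gradient, and $I_K$ is defined by affine-invariant DOFs, $\widehat{I_K\phi}=I_{\hat K}\hat\phi$. On $\hat K$, $I_{\hat K}$ is bounded $H^{2}\to\mathbb P^{k+1}$ (for $k+2\ge2$, point values are continuous) and reproduces $\mathbb P^{k+1}$. Therefore
\[
|\hat\phi-I_{\hat K}\hat\phi|_{H^{1}(\hat K)}+|\hat\phi-I_{\hat K}\hat\phi|_{H^{2}(\hat K)}\le C|\hat\phi|_{H^{k+2}(\hat K)}.
\]
Scaling back with shape regularity yields
\[
\|\mathbf e_\pi\|_K+h_K|\mathbf e_\pi|_{H^1(K)}\le Ch_K^{k+1}|\phi_K|_{H^{k+2}(K)}\le Ch_K^{k+1}|\mathbf B|_{H^{k+1}(K)}
\]
by~\eqref{eq:err_stream_function} with $m=k+1$. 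Here the main care point is that the constant in~\eqref{eq:err_stream_function} must be scale-uniform; I would either assume it as stated for shape-regular $K$, or replace $\phi_K$ by a pulled-back reference stream function, which is legitimate since $\Pi_h$ is independent of the choice of $\phi_K$ up to constants. The trace bound~\eqref{eq:err_trace_approximation} then follows from the multiplicative trace inequality
\[
\|v\|_{\partial K}^2\le C\bigl(h_K^{-1}\|v\|_K^2+\|v\|_K|v|_{H^1(K)}\bigr),
\]
applied with the two bounds just obtained.
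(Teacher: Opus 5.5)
Your proposal is correct and follows essentially the same route as the paper. Orthogonality comes from Green's formula and one-dimensional integration by parts applied to $\phi_K-I_K\phi_K$, using the vertex, edge, and interior degrees of freedom of $I_K$; the approximation bounds come from affine Bramble--Hilbert scaling plus a trace inequality; and time commutation follows from linearity and boundedness of $\Pi_h$ on a fixed mesh. Your caveat about scale-uniformity of the constant in~\eqref{eq:err_stream_function} is unnecessary: on $K$ one has $\nabla\phi_K=(-B_y,B_x)^{\mathsf T}$, so $|\phi_K|_{H^{m+1}(K)}\le C_m|\mathbf B|_{H^m(K)}$ with a constant depending only on $m$.
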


\begin{proof}
With $\epsilon_K=\phi_K-I_K\phi_K$, Green's formula with $\mathbf q\in[\mathbb P^{k-1}(K)]^2$ and integration by parts along $e=(v_a,v_b)$ against $q\in\mathbb P^k(e)$ give
		\[
		\begin{aligned}
			(\mathbf e_\pi,\mathbf q)_K&=(\epsilon_K,\hat\nabla\times\mathbf q)_K-\sum_{e\subset\partial K}(\epsilon_K,\mathbf q\cdot\mathbf m_K)_e=0,\\
			(\mathbf e_\pi\cdot\mathbf n_e,q)_e&=[\epsilon_K q]_{v_a}^{v_b}-(\epsilon_K,\partial_{\mathbf m_e}q)_e=0,
		\end{aligned}
		\]
		by~\eqref{eq:err_potential_interpolant}, proving~\eqref{eq:err_volume_moments}--\eqref{eq:err_normal_moments}. 

Bramble--Hilbert scaling and the trace inequality for $I_K$~\cite{ern2021finite},
together with~\eqref{eq:err_stream_function}, give
\eqref{eq:err_approximation}--\eqref{eq:err_trace_approximation}.
Boundedness and linearity on a fixed mesh give~\eqref{eq:err_time_commutation}
for $C^1$ curves~\cite{hunter2001applied}.
\end{proof}

\subsection{Consistency bounds}\label{subsec:err_consistency}
The following lemma bounds the DG residual and jumps of the projection error.

\begin{lemma}\label{lem:err_DG}
		Let $\mathbf B\in[H^{k+1}(\Omega)]^2$ be divergence-free and set $\mathbf e_\pi=\mathbf B-\Pi_h\mathbf B$. For every $\mathbf z\in\mathbf W_h^k$, the DG residual satisfies
		\begin{equation}\label{eq:err_reduced_flux}
			(G_h(\mathbf e_\pi),\mathbf z)
			=\sum_{e\in\mathscr E}\left(
			u_{n,e}\{\mathbf e_\pi\cdot\mathbf m_e\}_e
			-\frac{U}{2}[\mathbf e_\pi\cdot\mathbf m_e]_e,
			[\mathbf z\cdot\mathbf m_e]_e
			\right)_e,
		\end{equation}
		where $u_{n,e}=\mathbf u\cdot\mathbf n_e$, and
		\begin{align}
			|(G_h(\mathbf e_\pi),\mathbf z)|
			&\le C\sqrt U\,h^{k+1/2}|\mathbf B|_{H^{k+1}(\Omega)} J_U(\mathbf z),
			\label{eq:err_DG_residual}\\
			J_U(\Pi_h\mathbf B)^2
			&\le CUh^{2k+1}|\mathbf B|_{H^{k+1}(\Omega)}^2.
			\label{eq:err_jump_consistency}
		\end{align}
	\end{lemma}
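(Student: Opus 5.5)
Start from the definition~\eqref{eq:DG} of $G_h$, extended to $\prod_K[H^1(K)]^2$, applied to $\mathbf e_\pi$ and tested with $\mathbf z\in\mathbf W_h^k$. Summing over $K$, the volume term is $\sum_K(E(\mathbf u,\mathbf e_\pi),\partial_yz_x-\partial_xz_y)_K$. Since $\mathbf z|_K\in[\mathbb P^k(K)]^2$, the scalar $\hat\nabla\times\mathbf z|_K$ lies in $\mathbb P^{k-1}(K)$. Moreover, $E(\mathbf u,\cdot)$ is a constant-coefficient linear combination of the components, so the volume moment orthogonality~\eqref{eq:err_volume_moments} makes every volume term vanish. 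For $k=0$ the curl is zero and there is nothing to check.

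For the face terms, regroup the cellwise sums $\sum_K\sum_{e\subset\partial K}(\widehat E_e(\mathbf e_\pi),\mathbf z\cdot\mathbf m_K)_e$ edge by edge. With $\mathbf m_{K^\pm}=\mp\mathbf m_e$, each edge contributes $-(\widehat E_e(\mathbf e_\pi),[\mathbf z\cdot\mathbf m_e]_e)_e$, up to the sign convention fixed by the orientation. Next expand $\widehat E_e(\mathbf e_\pi)=\{E(\mathbf u,\mathbf e_\pi)\}_e+\tfrac U2[\mathbf e_\pi\cdot\mathbf m_e]_e$. Decompose $\mathbf u=u_{n,e}\mathbf n_e+u_{m,e}\mathbf m_e$; then on $e$,
\[
E(\mathbf u,\mathbf w)=u_{m,e}\,\mathbf w\cdot\mathbf n_e-u_{n,e}\,\mathbf w\cdot\mathbf m_e ,
\]
which is consistent with the identity $E^+-E^-=-u_{n,f}[\mathbf w\cdot\mathbf m_f]_f$ used in Lemma~\ref{lem:en_flux_data}.

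The piece $u_{m,e}\{\mathbf e_\pi\cdot\mathbf n_e\}_e$ is paired with $[\mathbf z\cdot\mathbf m_e]_e\in\mathbb P^k(e)$. The normal moment orthogonality~\eqref{eq:err_normal_moments} holds from each side, since $\mathbf B$ has continuous normal traces and $\Pi_h\mathbf B\in\mathbf W_h^k$, so this piece vanishes. What remains is exactly~\eqref{eq:err_reduced_flux}. The main bookkeeping hazard is the orientation: the signs of $\mathbf m_K$ versus $\mathbf m_e$, and the convention $[w]=w^+-w^-$. I would check these against the energy identity~\eqref{eq:energy_dg}, which must come out as $-\tfrac12J_U^2$ when $\mathbf e_\pi$ is replaced by $\mathbf z$.

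For~\eqref{eq:err_DG_residual}, apply Cauchy--Schwarz on each edge, using $|u_{n,e}|\le U$:
\[
|(G_h(\mathbf e_\pi),\mathbf z)|\le \tfrac32\,U\sum_e\bigl(\|\mathbf e_\pi^-\|_e+\|\mathbf e_\pi^+\|_e\bigr)\|[\mathbf z\cdot\mathbf m_e]_e\|_e .
\]
A discrete Cauchy--Schwarz over edges then bounds this by
\[
C\sqrt U\Bigl(\sum_K\|\mathbf e_\pi\|_{\partial K}^2\Bigr)^{1/2}J_U(\mathbf z).
\]
The trace approximation~\eqref{eq:err_trace_approximation}, with $h_K\le h$ and each cell counted a bounded number of times, gives $\sum_K\|\mathbf e_\pi\|_{\partial K}^2\le Ch^{2k+1}|\mathbf B|_{H^{k+1}(\Omega)}^2$.

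For~\eqref{eq:err_jump_consistency}, note that $\mathbf B\in H^{k+1}(\Omega)$ with $k+1\ge1$ has single-valued traces, so $[\mathbf B\cdot\mathbf m_e]_e=0$. Hence $[\Pi_h\mathbf B\cdot\mathbf m_e]_e=-[\mathbf e_\pi\cdot\mathbf m_e]_e$, which gives
\[
J_U(\Pi_h\mathbf B)^2\le 2U\sum_K\|\mathbf e_\pi\|_{\partial K}^2 ,
\]
and~\eqref{eq:err_trace_approximation} finishes the bound. No step is deep; the only real care is needed in the sign and orientation computation of the first step.
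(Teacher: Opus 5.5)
Your proposal is correct and follows the paper's proof essentially step for step. The volume moments \eqref{eq:err_volume_moments} remove the volume term, and the edge pairing with $\mathbf m_{K^\pm}=\mp\mathbf m_e$ and the splitting $E=u_{m,e}\,\mathbf w\cdot\mathbf n_e-u_{n,e}\,\mathbf w\cdot\mathbf m_e$ reproduces the paper's intermediate formula, whose normal term \eqref{eq:err_normal_moments} then removes; Cauchy--Schwarz with \eqref{eq:err_trace_approximation} and $[\mathbf B\cdot\mathbf m_e]_e=0$ give both bounds (your constant $\tfrac32U$ could be $U$, which is immaterial).
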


\begin{proof}
Since $\mathbf u$ is constant, \eqref{eq:err_volume_moments}
eliminates the volume term in~\eqref{eq:DG}. Pairing the cell
contributions on each edge gives
\[
 (G_h(\mathbf e_\pi),\mathbf z)
 =\sum_{e\in\mathscr E}\Bigl(
 u_{n,e}\{\mathbf e_\pi\cdot\mathbf m_e\}_e
 -(\mathbf u\cdot\mathbf m_e)\{\mathbf e_\pi\cdot\mathbf n_e\}_e-\frac{U}{2}
   [\mathbf e_\pi\cdot\mathbf m_e]_e,\,
 [\mathbf z\cdot\mathbf m_e]_e
 \Bigr)_e.
\]
Since
$(\mathbf u\cdot\mathbf m_e)[\mathbf z\cdot\mathbf m_e]_e
\in\mathbb P^k(e)$, the normal-component term vanishes by
\eqref{eq:err_normal_moments}, proving~\eqref{eq:err_reduced_flux}.

The trace estimate~\eqref{eq:err_trace_approximation} gives
\[
 \sum_{e\in\mathscr E}\left(
 \|\{\mathbf e_\pi\cdot\mathbf m_e\}_e\|_e^2
 +\|[\mathbf e_\pi\cdot\mathbf m_e]_e\|_e^2
 \right)
 \le Ch^{2k+1}|\mathbf B|_{H^{k+1}(\Omega)}^2.
\]
Together with $|u_{n,e}|\le U$, this bound and
Cauchy--Schwarz yield~\eqref{eq:err_DG_residual}. Since the traces of $\mathbf B$ agree across each edge,
$[\Pi_h\mathbf B\cdot\mathbf m_e]_e
=-[\mathbf e_\pi\cdot\mathbf m_e]_e$;
the same bound therefore gives~\eqref{eq:err_jump_consistency}.
\end{proof}

\begin{lemma}\label{lem:err_patch_consistency}
		For every divergence-free field $\mathbf B\in[H^{k+1}(\Omega)]^2$,
		\begin{equation}\label{eq:err_patch_consistency}
			D_U(\Pi_h\mathbf B)^2
			\le CUh^{2k+1}|\mathbf B|_{H^{k+1}(\Omega)}^2.
		\end{equation}
	\end{lemma}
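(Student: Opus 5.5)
We need $D_U(\Pi_h\mathbf B)^2=\sum_v (U/h_v)\|\mathcal P_v\Pi_h\mathbf B\|_{\omega_v}^2\le CUh^{2k+1}|\mathbf B|^2$. The plan is to use the fact that $\mathcal P_v$ annihilates divergence-free patch polynomials and is an $L^2$-orthogonal projection, hence has norm at most one. This reduces the estimate to a local best-approximation error of $\Pi_h\mathbf B$ on each vertex patch, and that error is controlled by Bramble--Hilbert plus the projection bounds of Lemma~\ref{lem:err_projection}.

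\textbf{Step 1 (patch-polynomial invariance).} By~\eqref{eq:scheme_polynomial_consistency}, $\zeta_v(\mathbf p)=0$ for every $\mathbf p\in[\mathbb P^k(\omega_v)]^2$. Hence $\mathcal P_v\mathbf p=\mathbf 0$, and for any $\mathbf p\in\mathbf W^k(\omega_v)$,
\[
\|\mathcal P_v\Pi_h\mathbf B\|_{\omega_v}=\|\mathcal P_v(\Pi_h\mathbf B-\mathbf p)\|_{\omega_v}\le\|\Pi_h\mathbf B-\mathbf p\|_{\omega_v}.
\]
This is also just the first inequality of Lemma~\ref{lem:en_regularity}, applied patchwise. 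Note that $\mathcal P_v$ is applied here to a function in $L^2(\omega_v)$, so no membership in $\mathbf W_h^k$ is needed. In any case $\Pi_h\mathbf B\in\mathbf W_h^k$.

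\textbf{Step 2 (choice of $\mathbf p$).} On $\omega_v$, which by assumption embeds in $\mathbb R^2$ without self-overlap, take a stream function $\phi$ of $\mathbf B$ on the convex hull (or a ball) $\widehat\omega_v$ containing $\omega_v$ of diameter $\simeq h$. Since $\mathbf B$ is divergence-free and $\widehat\omega_v$ is simply connected, $\phi\in H^{k+2}(\widehat\omega_v)$ exists with $|\phi|_{H^{m+1}}\le C|\mathbf B|_{H^m}$. Let $p_\phi\in\mathbb P^{k+1}$ be the averaged Taylor polynomial of $\phi$ on $\widehat\omega_v$ and set $\mathbf p=\hat\nabla\times p_\phi$. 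Then $\mathbf p\in\mathbf W^k(\omega_v)$ and $\mathbf p$ is also the averaged Taylor polynomial of $\mathbf B$ of degree $k$. Bramble--Hilbert on the shape-regular patch gives
\[
\|\mathbf B-\mathbf p\|_{\omega_v}\le Ch^{k+1}|\mathbf B|_{H^{k+1}(\widehat\omega_v)}.
\]
One could instead use an $H^{k+1}$ extension neighborhood of fixed multiple of $h$. The finite overlap of these neighborhoods keeps the sum bounded by $C|\mathbf B|^2_{H^{k+1}(\Omega)}$, which is the only mildly delicate bookkeeping point.

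\textbf{Step 3 (triangle inequality).} Write
\[
\|\Pi_h\mathbf B-\mathbf p\|_{\omega_v}\le\|\mathbf e_\pi\|_{\omega_v}+\|\mathbf B-\mathbf p\|_{\omega_v}.
\]
The first term is bounded cellwise by~\eqref{eq:err_approximation}, which gives $Ch^{k+1}|\mathbf B|_{H^{k+1}(\omega_v)}$. Combining with Steps 1--2 and $h_v\simeq h$,
\[
\frac{U}{h_v}\|\mathcal P_v\Pi_h\mathbf B\|_{\omega_v}^2\le CUh^{2k+1}|\mathbf B|^2_{H^{k+1}(\widehat\omega_v)}.
\]
Summing over $v$ with bounded overlap of the $\widehat\omega_v$ (from quasi-uniformity and shape regularity) yields~\eqref{eq:err_patch_consistency}.

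The main obstacle is Step 2: ensuring that the approximating patch polynomial is exactly divergence-free. Taking the curl of a Taylor polynomial of the stream function resolves this. One could alternatively use the $L^2(\omega_v)$-projection onto $\mathbf W^k(\omega_v)$ directly, but the quasi-optimality of that projection would itself need the Taylor argument.
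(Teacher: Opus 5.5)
Your proof is correct and follows the paper's argument: $\mathcal P_v$ annihilates patch polynomials and is $L^2$-nonexpansive, you take a local Bramble--Hilbert approximant on an $\mathcal O(h)$ neighborhood of $\omega_v$, apply the triangle inequality with $\|\mathbf e_\pi\|_{\omega_v}$ and \eqref{eq:err_approximation}, and sum using bounded overlap. The one difference is that your Step 2 stream-function construction of a divergence-free approximant is unnecessary: as your own Step 1 notes, \eqref{eq:scheme_polynomial_consistency} gives $\mathcal P_v\mathbf p=\mathbf 0$ for every $\mathbf p\in[\mathbb P^k(\omega_v)]^2$, so the paper simply uses any vector Bramble--Hilbert polynomial on $B_{2h}(v)$.
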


\begin{proof}
    On each ball $\mathcal O_v=B_{2h}(v)\supset\omega_v$,
    Bramble--Hilbert approximation~\cite{bramble1970estimation,dupont1980polynomial}
    gives a common polynomial $\mathbf p_v\in[\mathbb P^k(\mathcal O_v)]^2$ with
    $\|\mathbf B-\mathbf p_v\|_{\omega_v}
    \le Ch^{k+1}|\mathbf B|_{H^{k+1}(\mathcal O_v)}$.
    Polynomial cancellation~\eqref{eq:scheme_polynomial_consistency}
    and~\eqref{eq:scheme_zeta}, tested with $\zeta_v(\Pi_h\mathbf B)$, give
    \begin{equation}\label{eq:err_patch_projection_estimate}
        \|\mathcal P_v\Pi_h\mathbf B\|_{\omega_v}
        \le Ch^{k+1}|\mathbf B|_{H^{k+1}(\mathcal O_v)}
        +\|\mathbf e_\pi\|_{\omega_v}.
    \end{equation}
    Square~\eqref{eq:err_patch_projection_estimate} and sum with weights
    $U/h_v$. By shape regularity and quasi-uniformity, the
    neighborhoods $\mathcal O_v$ have uniformly bounded overlap, including
    across periodic boundaries. Together with~\eqref{eq:err_approximation},
    this gives~\eqref{eq:err_patch_consistency}.
\end{proof}

\subsection{The proof of Theorem~\texorpdfstring{\ref{thm:err_main}}{4.1}}\label{subsec:err_proof}

\begin{proof}

By~\eqref{eq:en_decomposition} and the $L^2$-orthogonality of $\mathcal P_v$, subtracting the exact consistency relation $(\mathbf B_t,\mathbf z)=(G_h(\mathbf B),\mathbf z)$ from~\eqref{eq:scheme_final_rate} yields the error equation
\begin{equation}\label{eq:err_equation}
			\begin{aligned}
				(\partial_t\mathbf e_h,\mathbf z)
				={}&(\mathcal L_h(\mathbf e_h),\mathbf z)
				-(G_h(\mathbf e_\pi),\mathbf z)+(\partial_t\mathbf e_\pi,\mathbf z)\\
				&+(\mathcal R_h(\Pi_h\mathbf B),\mathbf z)-\tau\sum_{v\in\mathscr V}\frac U{h_v}(\mathcal P_v\Pi_h\mathbf B,\mathcal P_v\mathbf z)_{\omega_v},
			\end{aligned}
		\end{equation}
for all $\mathbf z\in\mathbf W_h^k$.
Under~\eqref{eq:err_tau}, completing the square as in
\eqref{eq:en_constant_energy} gives, for some $c>0$ independent of $h$ and every $\mathbf w\in\mathbf W_h^k$,
\begin{equation}\label{eq:err_coercivity}
    \frac12\bigl(J_U(\mathbf w)-C_RD_U(\mathbf w)\bigr)^2
    +\delta D_U(\mathbf w)^2
    \ge c\bigl(J_U(\mathbf w)^2+D_U(\mathbf w)^2\bigr).
\end{equation}
Testing~\eqref{eq:err_equation} with $\mathbf z=\mathbf e_h$ and using the DG energy identity~\eqref{eq:energy_dg},
Lemmas~\ref{lem:en_defect_bound}, \ref{lem:err_DG},
and~\ref{lem:err_patch_consistency}, together with
Cauchy--Schwarz, give
\begin{equation}\label{eq:err_preabsorption}
    \begin{aligned}
        \frac12\frac{\mathrm d}{\mathrm dt}\|\mathbf e_h\|^2
        &+c\bigl(J_U(\mathbf e_h)^2+D_U(\mathbf e_h)^2\bigr)
        \\\le{}&C\sqrt U\,h^{k+1/2}|\mathbf B(t)|_{H^{k+1}}
        \bigl[J_U(\mathbf e_h)+D_U(\mathbf e_h)\bigr]
        +\|\partial_t\mathbf e_\pi\|\,\|\mathbf e_h\|.
    \end{aligned}
\end{equation}
Young's inequality absorbs the terms involving $J_U(\mathbf e_h)$ and
$D_U(\mathbf e_h)$; discarding the remaining nonnegative dissipation yields
\begin{equation}\label{eq:err_energy}
        \frac12\frac{\mathrm d}{\mathrm dt}\|\mathbf e_h\|^2
        \le{}CUh^{2k+1}|\mathbf B(t)|_{H^{k+1}}^2+\|\partial_t\mathbf e_\pi\|\,\|\mathbf e_h\|.
\end{equation}
A standard energy argument applied to~\eqref{eq:err_energy},
together with the projection properties
\eqref{eq:err_approximation} and~\eqref{eq:err_time_commutation},
yields~\eqref{eq:err_main_bound}. 
Finally, the periodic translation
$\mathbf B(\mathbf x,t)=\mathbf B_0(\mathbf x-\mathbf u t)$
yields the bounds~\cite{taylor2011partial}
\begin{equation}\label{eq:classic_transport}
    \|\mathbf B(t)\|_{H^{k+1}(\Omega)}
    =\|\mathbf B_0\|_{H^{k+1}(\Omega)},\qquad
    \|\partial_t\mathbf B(t)\|_{H^{k+1}(\Omega)}
    \le CU\|\mathbf B_0\|_{H^{k+2}(\Omega)}.
\end{equation}
Together with~\eqref{eq:err_initial_accuracy}, these bounds
give~\eqref{eq:err_rate}.
\end{proof}

\section{Numerical experiments}\label{sec:numerical}
We test~\eqref{eq:scheme_final_rate} for $0\le k\le 4$, with the penalties in~\eqref{eq:tau}, on periodic unstructured triangular meshes of $\Omega=[0,1)^2$. Following~\cite{yang2016stability}, we take $\mathbf u=(1,1)^{\mathsf T}$ and
\begin{equation}\label{eq:num_exact}
  B_x(x,y,t)=-\sin\bigl(2\pi(y-t)\bigr),\qquad
  B_y(x,y,t)=\sin\bigl(2\pi(x-t)\bigr).
\end{equation}
The exact solution has period $1$ and magnetic energy $\frac12\|\mathbf B(t)\|^2=\frac12$. We generate periodic Delaunay meshes with target sizes $1/N$, $N=8,16,32,64$, and effective size $h_{\rm eff}=(2/N_K)^{1/2}$, where $N_K$ is the cell count. Here $U=\sqrt2$, and we integrate to $T=1$ using the ten-stage, fourth-order SSP-RK method~\cite{ketcheson2008highly}, with $\Delta t=h_{\rm eff}/(2k+1)$.

Figure~\ref{fig:num_convergence} reports the final-time $L^2$ errors. The convergence rates on the finest pair of meshes are $0.90$, $2.09$, $3.13$, $4.03$, and $5.18$ for $0\le k\le 4$, respectively, consistent with order $k+1$ on these test meshes.

\begin{figure}
  \centering
  \includegraphics[width=.82\textwidth]{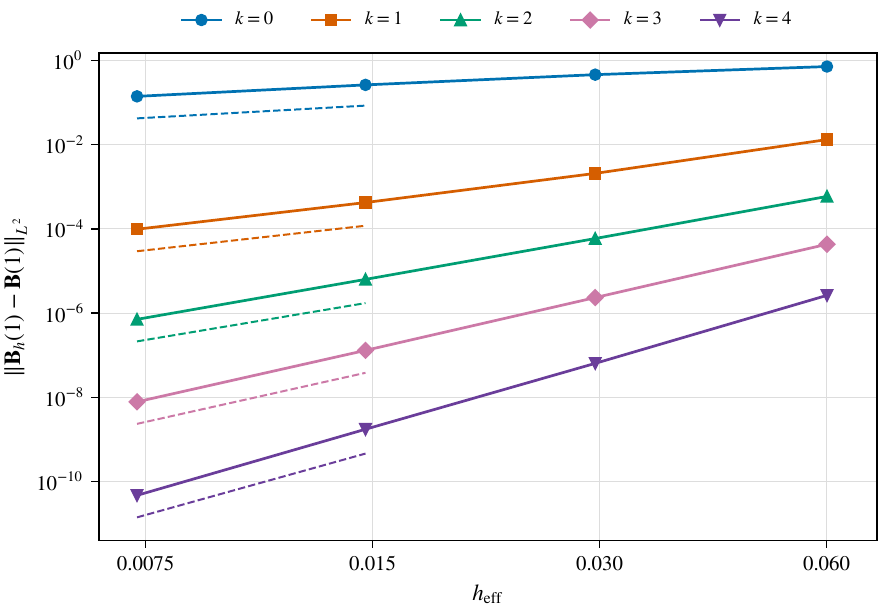}
  \caption{Final-time $L^2$ errors on four periodic unstructured meshes for $0\le k\le 4$. Dashed reference lines have slopes $k+1$.}
  \label{fig:num_convergence}
\end{figure}

On the $N=16$ mesh, Table~\ref{tab:num_energy} records the energy $\mathcal E_h(t)=\frac12\|\mathbf B_h(t)\|^2$, its semidiscrete rate $\dot{\mathcal E}_h(t)=(\mathbf B_h(t),\mathcal L_h(\mathbf B_h(t)))$, and the single-step increments over $[0,1]$. The recorded rates and increments are negative for every degree. The rates are consistent with Theorem~\ref{thm:en_constant}; the negative increments are a numerical observation for the chosen time integrator. Across all five degrees, the maximum cellwise divergence is $4.53\times10^{-12}$ and the maximum normal-jump basis coefficient is $6.31\times10^{-13}$, consistent with GDF preservation to roundoff.

\begin{table}[htbp]
		\centering\small
		\caption{Energy diagnostics over one period on the mesh with $N=16$.
			The maxima are taken over the recorded output times for each degree.}
		\label{tab:num_energy}
		\begin{tabular}{r r@{$\,\times\,$}l r@{$\,\times\,$}l r@{$\,\times\,$}l}
			\hline
			$k$
			& \multicolumn{2}{c}{$\mathcal E_h(1)-\mathcal E_h(0)$}
			& \multicolumn{2}{c}{$\max\dot{\mathcal E}_h$}
			& \multicolumn{2}{c}{$\max\Delta\mathcal E_h$}\\
			\hline
			0 & $-3.513$ & $10^{-1}$  & $-1.807$ & $10^{-1}$  & $-1.040$ & $10^{-3}$\\
			1 & $-1.276$ & $10^{-3}$  & $-8.585$ & $10^{-4}$  & $-3.875$ & $10^{-6}$\\
			2 & $-6.627$ & $10^{-6}$  & $-6.380$ & $10^{-6}$  & $-1.868$ & $10^{-8}$\\
			3 & $-4.516$ & $10^{-9}$  & $-4.336$ & $10^{-9}$  & $-1.810$ & $10^{-11}$\\
			4 & $-3.162$ & $10^{-12}$ & $-2.656$ & $10^{-12}$ & $-2.115$ & $10^{-14}$\\
			\hline
		\end{tabular}
	\end{table}

\section{Concluding remarks}\label{sec7}

We have established semidiscrete energy stability and an $L^2$ error bound of order $k+1/2$ for the proposed GDF-DG method on periodic triangular meshes. Numerical experiments preserve global divergence freedom to roundoff and show energy dissipation and order $k+1$ convergence for a smooth test problem. Extending the analysis to three dimensions and applying the proposed method to the full MHD system are directions for future work.

\appendix
\section{Proof of the patch regularity}
\label{app:patch_regularity} For the next three lemmas, fix $\mathbf w\in\mathbf W_h^k$,
take its zero-mean potential $\phi_h$ from~\eqref{eq:compatible_potential},
and set
\[
\zeta_v=\zeta_v(\mathbf w),\qquad
 a(\phi,\eta)=(\nabla\phi,\nabla\eta).
\]

\begin{lemma}
\label{lem:en_local_multipliers}
For every $v\in\mathscr V$, there exist $q_v\in\mathbb P^{k-1}(\omega_v)$ such that
\begin{equation}\label{eq:en_local_multiplier}
 a(\phi_h,\eta)=(q_v,\eta)_{\omega_v}+a(\zeta_v,\eta),
 \qquad \eta\in S_0^{k+1}(\omega_v).
\end{equation}
For every edge $e=(v,v')$, these multipliers satisfy
\begin{equation}\label{eq:en_multiplier_difference}
 \|q_v-q_{v'}\|_{\omega_e}
 \le Ch^{-1}\bigl(
 \|\mathcal P_v\mathbf w\|_{\omega_v}
 +\|\mathcal P_{v'}\mathbf w\|_{\omega_{v'}}\bigr).
\end{equation}
Moreover, there exists a smooth periodic function $f_0\in C^\infty(\Omega)$ with
\begin{equation}\label{eq:en_smooth_patch_budget}
 \sum_v\|q_v-f_0\|_{\omega_v}^2+h^{2k}|f_0|_{H^k}^2
 \le C\sum_{e=(v,v')}\|q_v-q_{v'}\|_{\omega_e}^2
 \le Ch^{-2}\sum_v\|\mathcal P_v\mathbf w\|_{\omega_v}^2.
\end{equation}
\end{lemma}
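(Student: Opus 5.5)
Fix $v$ and consider the functional $\ell_v(\eta)=a(\phi_h,\eta)-a(\zeta_v,\eta)$ on $S_0^{k+1}(\omega_v)$. By \eqref{eq:compatible_patch_projection} it vanishes on $H_v$, which is the $L^2$-annihilator of $\mathbb P^{k-1}(\omega_v)$ inside $S_0^{k+1}(\omega_v)$. A linear functional vanishing on the common kernel of the finitely many functionals $\eta\mapsto(\eta,q)_{\omega_v}$, $q\in\mathbb P^{k-1}(\omega_v)$, is a combination of them; this is the finite-dimensional Lagrange multiplier lemma. So $\ell_v(\eta)=(q_v,\eta)_{\omega_v}$ for some $q_v\in\mathbb P^{k-1}(\omega_v)$, which is \eqref{eq:en_local_multiplier}. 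Uniqueness of $q_v$ follows because the map $q\mapsto(q,\cdot)_{\omega_v}$ is injective on $S_0^{k+1}(\omega_v)$: take $\eta=\lambda_v q$ (of degree $k$, admissible) and use the positivity of the weighted Gram matrix from \eqref{eq:scheme_vertex_moments}.

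For \eqref{eq:en_multiplier_difference}, take the edge $e=(v,v')$ and test with $\eta=\lambda_e q$, $q\in\mathbb P^{k-1}(\omega_e)$. This $\eta$ lies in both $S_0^{k+1}(\omega_v)$ and $S_0^{k+1}(\omega_{v'})$. Subtracting the two instances of \eqref{eq:en_local_multiplier} gives
\[
(q_v-q_{v'},\lambda_e q)_{\omega_e}=a(\zeta_{v'}-\zeta_v,\lambda_e q).
\]
Now choose $q=q_v-q_{v'}$. Weighted norm equivalence on the shape-regular two-cell patch gives $(q,\lambda_e q)_{\omega_e}\ge c\|q\|_{\omega_e}^2$. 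An inverse estimate gives $\|\nabla(\lambda_e q)\|\le Ch^{-1}\|q\|_{\omega_e}$. Together with $\|\nabla\zeta_v\|_{\omega_v}=\|\mathcal P_v\mathbf w\|_{\omega_v}$ from \eqref{eq:en_patch_identity}, this yields \eqref{eq:en_multiplier_difference}. The second inequality in \eqref{eq:en_smooth_patch_budget} then follows by squaring and summing, since the number of edges per vertex is bounded.

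The main obstacle is the first inequality in \eqref{eq:en_smooth_patch_budget}: building one smooth $f_0$ whose local polynomial approximations are the $q_v$, with cost controlled by the jumps of the multipliers between neighbouring patches. My plan is a quasi-interpolation by a smooth partition of unity. Take a periodic partition of unity $\{\chi_v\}$ with $\operatorname{supp}\chi_v$ in an $\mathcal O(h)$ neighbourhood of $v$, bounded overlap, $\sum_v\chi_v=1$, and $|\partial^\alpha\chi_v|\le Ch^{-|\alpha|}$. Extend each $q_v$ polynomially to a fixed neighbourhood, possible by the non-self-overlap assumption, and set $f_0=\sum_v\chi_v q_v$.

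On $\omega_{v_0}$ this gives $f_0-q_{v_0}=\sum_v\chi_v(q_v-q_{v_0})$. Here only $\mathcal O(1)$ vertices $v$ at graph distance $\mathcal O(1)$ contribute. Each difference $q_v-q_{v_0}$ is a telescoping sum of $q_{v_i}-q_{v_{i+1}}$ along short edge paths. Since these are polynomials, polynomial norm equivalence on comparable-size sets transfers $\|\cdot\|_{\omega_e}$ bounds to the $\mathcal O(h)$ neighbourhood. This gives $\|q_{v_0}-f_0\|_{\omega_{v_0}}^2\le C\sum_{\text{nearby }e}\|q_v-q_{v'}\|_{\omega_e}^2$.

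For $|f_0|_{H^k}$, use that $\partial^\alpha q_{v_0}=0$ for $|\alpha|=k$, because $q_{v_0}$ has degree $k-1$. Then $\partial^\alpha f_0=\partial^\alpha\sum_v\chi_v(q_v-q_{v_0})$ locally. By Leibniz, each term is bounded by $h^{-k}$ times the local differences, after using inverse estimates on the polynomial differences. Hence $h^{2k}|f_0|_{H^k}^2\le C\sum_e\|q_v-q_{v'}\|_{\omega_e}^2$. The only delicate point is that the partition of unity must be periodic and compatible with the mesh scale; quasi-uniformity and the small-$h$ assumption should suffice.
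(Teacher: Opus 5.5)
Your proposal is correct and follows the paper's proof. It uses the same three steps: finite-dimensional duality for $q_v$; subtracting the two local identities and testing with $\lambda_e(q_v-q_{v'})$; and $f_0=\sum_v\chi_vq_v$ with a mesh-scale smooth partition of unity, Leibniz, $\partial^\alpha q_v=0$ for $|\alpha|=k$, and inverse estimates.

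The only difference is how $\chi_v$ is chosen. The paper builds $\chi_v$ by normalizing mollifications of $\mathbf 1_{\{\lambda_v>1/6\}}$, which gives $\operatorname{supp}\chi_v\subset\{\lambda_v>0\}$. Then on each cell $K$ only the three vertices of $K$ contribute, and every $q_{v'}-q_v$ is already an edge-patch difference on $K\subset\omega_e$. This makes your polynomial extension of $q_v$ beyond $\omega_v$, and the telescoping with norm transfer, unnecessary. Your version remains valid for any mesh-scale partition of unity.
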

\begin{proof}
Since $a(\phi_h-\zeta_v,\cdot)$ vanishes on $H_v$
by~\eqref{eq:compatible_patch_projection}, finite-dimensional
duality gives~\eqref{eq:en_local_multiplier}.
Subtract the identities associated with the endpoints of
$e=(v,v')$ and test with
$\lambda_e(q_v-q_{v'})$, which belongs to both local spaces
and has degree at most $k+1$.
Weighted norm equivalence and the inverse inequality give
\eqref{eq:en_multiplier_difference}.

To combine the local multipliers $q_v$, we construct a smooth periodic
function $f_0$. Choose a nonnegative smooth function $\rho$ supported in the unit ball $B_1(0)$ with $\int_{\mathbb R^2}\rho\,\mathrm d\mathbf x=1$. For a
sufficiently small fixed $\varepsilon>0$, set
\begin{equation}\label{eq:app_smoothing_weights}
 \rho_{\varepsilon h}=(\varepsilon h)^{-2}
 \rho\bigl(\,\cdot\,/(\varepsilon h)\bigr),\qquad
 g_v=\rho_{\varepsilon h}*\mathbf1_{\{\lambda_v>1/6\}},\qquad
 \chi_v=\frac{g_v}{\sum_{v'}g_{v'}}.
\end{equation}
For sufficiently small \(\varepsilon\), we have
\(\sum_{v'}g_{v'}(\mathbf x)\ge1\), so \(\{\chi_v\}\) is a smooth nonnegative partition of unity satisfying
\begin{equation}\label{eq:app_partition_props}
 \operatorname{supp}\chi_v\subset\{\lambda_v>0\},\qquad
 \|\partial^\alpha\chi_v\|_\infty\le C_\alpha h^{-|\alpha|},
 \quad \alpha\in\mathbb N_0^2,
\end{equation}
and set $f_0=\sum_v\chi_vq_v$. This construction is
analogous to the partition-of-unity approach
in~\cite{babuska1997partition}.

For a cell $K$ with vertex set $V(K)$, fix $v\in V(K)$. Since \(\sum_{v'\in V(K)}\chi_{v'}=1\), we obtain
\begin{equation}\label{eq:app_patch_identity}
 f_0-q_v=\sum_{v'\in V(K)}\chi_{v'}(q_{v'}-q_v)\qquad\text{on }K.
\end{equation}
Using~\eqref{eq:app_patch_identity}, \eqref{eq:app_partition_props},
$\partial^\alpha q_v=0$ for $|\alpha|=k$, the Leibniz rule, and
polynomial inverse estimates yields
\begin{equation}\label{eq:app_patch_local_bound}
 \|f_0-q_v\|_K+h^k|f_0|_{H^k(K)}
 \le C\sum_{v'\in V(K)}\|q_{v'}-q_v\|_K.
\end{equation}
Summing~\eqref{eq:app_patch_local_bound} over cells and vertices and
applying~\eqref{eq:en_multiplier_difference} gives
\eqref{eq:en_smooth_patch_budget}.
\end{proof}

\begin{lemma}[Reconstruction of mean-zero loads]
\label{lem:en_load_reconstruction}
There exist periodic functions $f\in H^k(\Omega)$ and
$r\in L^2(\Omega)$ such that
\begin{equation}\label{eq:en_reconstructed_load_budget}
 \begin{aligned}
 a(\phi_h,\eta_h)&=(f+r,\eta_h),
 &&\forall\eta_h\in S_h^{k+1},\\
 \int_\Omega f\,\mathrm d\Omega&=\int_\Omega r\,\mathrm d\Omega=0,\\
 h^{2k+2}|f|_{H^k}^2+h^2\|r\|^2&\le C\sum_v\|\mathcal P_v\mathbf w\|_{\omega_v}^2.
 \end{aligned}
\end{equation}
\end{lemma}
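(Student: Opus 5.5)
We start from the smooth periodic function $f_0$ of Lemma~\ref{lem:en_local_multipliers} and build $f$ as $f_0$ minus its mean, $f=f_0-\overline{f_0}$. The residual $r$ is then defined as the Riesz representative, within the broken space, of the defect $a(\phi_h,\cdot)-(f,\cdot)$ on $S_h^{k+1}$, measured through the local identities~\eqref{eq:en_local_multiplier}.

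The first step is to localize the test function with a partition of unity. Any $\eta_h\in S_h^{k+1}$ can be written as $\eta_h=\sum_v\eta_v$ with $\eta_v\in S_0^{k+1}(\omega_v)$. One concrete choice is $\eta_v=I_h(\lambda_v\eta_h)$, or, better, an exact splitting obtained by distributing the degrees of freedom: vertex dofs go to their own vertex, and edge and bubble dofs go to a chosen endpoint or vertex. This splitting is stable, $\|\eta_v\|_{\omega_v}\le C\|\eta_h\|_{\omega_v}$ and $\|\nabla\eta_v\|\le Ch^{-1}\|\eta_h\|_{\omega_v}$. Applying~\eqref{eq:en_local_multiplier} to each piece and summing gives
\[
a(\phi_h,\eta_h)=\sum_v(q_v,\eta_v)_{\omega_v}+\sum_v a(\zeta_v,\eta_v)=(f_0,\eta_h)+\sum_v(q_v-f_0,\eta_v)_{\omega_v}+\sum_v a(\zeta_v,\eta_v).
\]
The last two sums form a bounded linear functional $\ell(\eta_h)$. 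Using $\|\nabla\zeta_v\|=\|\mathcal P_v\mathbf w\|$ by~\eqref{eq:en_patch_identity}, together with the budget~\eqref{eq:en_smooth_patch_budget} and the stability of the splitting, we get
\[
|\ell(\eta_h)|\le C\Bigl(\sum_v\|q_v-f_0\|_{\omega_v}^2+h^{-2}\|\nabla\zeta_v\|_{\omega_v}^2\Bigr)^{1/2}\|\eta_h\|\le Ch^{-1}\Bigl(\sum_v\|\mathcal P_v\mathbf w\|^2_{\omega_v}\Bigr)^{1/2}\|\eta_h\|.
\]
We then take $r_0\in S_h^{k+1}$ to be the $L^2$ Riesz representative of $\ell$ on $S_h^{k+1}$. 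This gives $\|r_0\|\le Ch^{-1}(\sum_v\|\mathcal P_v\mathbf w\|^2)^{1/2}$ and $a(\phi_h,\eta_h)=(f_0+r_0,\eta_h)$ for all $\eta_h$.

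The second step fixes the means. Testing with $\eta_h=1\in S_h^{k+1}$ yields $0=a(\phi_h,1)=\int(f_0+r_0)$. We set $f=f_0-\overline{f_0}$ and $r=r_0+\overline{f_0}$. Then both have zero mean, $|f|_{H^k}=|f_0|_{H^k}$ for $k\ge1$, and the identity is unchanged. The main obstacle is bounding $h\|r\|$, since the shift $\overline{f_0}$ must itself be controlled.

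We expect $|\overline{f_0}|=|\overline{r_0}|\le C\|r_0\|$, which is already fine. For $k=0$, however, $f=f_0-\overline{f_0}$ must also be controlled, because $|f|_{H^0}=\|f\|$. Here we would use the Poincaré-type bound $\|f_0-\overline{f_0}\|\le C\|\nabla f_0\|$ only when $k\ge1$. For $k=0$ all $q_v\in\mathbb P^{-1}=\{0\}$, so $f_0=0$, $f=0$, and the case is trivial.

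Collecting the bounds, $h^{2k+2}|f|_{H^k}^2\le h^2\cdot Ch^{-2}\sum_v\|\mathcal P_v\mathbf w\|^2$ follows from~\eqref{eq:en_smooth_patch_budget}, and $h^2\|r\|^2\le C\sum_v\|\mathcal P_v\mathbf w\|^2$. Together these give~\eqref{eq:en_reconstructed_load_budget}.
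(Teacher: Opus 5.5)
Your proposal is correct and follows the paper's proof essentially step by step: you split $\eta_h$ into patch-supported pieces by distributing nodal coefficients, sum the local multiplier identities to get $(f_0,\eta_h)$ plus a functional of size $Ch^{-1}\bigl(\sum_v\|\mathcal P_v\mathbf w\|_{\omega_v}^2\bigr)^{1/2}\|\eta_h\|$, take its $L^2$ Riesz representative in $S_h^{k+1}$, and shift by the mean of $f_0$, which you control by testing with $\eta_h=1$. The differences are cosmetic: you assign each edge or bubble degree of freedom to a single vertex instead of splitting it equally, and you handle $k=0$ through $f_0=0$ rather than through nonexpansiveness of the mean-zero projection in the $H^k$ seminorm.
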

\begin{proof}
We decompose \(\eta_h=\sum_v\eta_v\), with \(\eta_v\in S_0^{k+1}(\omega_v)\), by distributing each Lagrange nodal coefficient equally among the vertices of its associated mesh entity.
Take $q_v,f_0$ from Lemma~\ref{lem:en_local_multipliers}. Reference-element norm equivalence,
scaling, and the inverse inequality~\cite{ern2021finite} yield
\begin{equation}\label{eq:en_nodal_splitting}
 \sum_v\bigl(\|\eta_v\|_{\omega_v}^2
 +h^2\|\nabla\eta_v\|_{\omega_v}^2\bigr)
 \le C\|\eta_h\|^2.
\end{equation}
Summing~\eqref{eq:en_local_multiplier} and applying
\eqref{eq:en_smooth_patch_budget} and~\eqref{eq:en_nodal_splitting}
gives
\begin{equation}\label{eq:en_global_residual_estimate}
 \begin{aligned}
 |a(\phi_h,\eta_h)-(f_0,\eta_h)|
 &=\left|\sum_v a(\zeta_v,\eta_v)
 +\sum_v(q_v-f_0,\eta_v)_{\omega_v}\right|\\
 &\le Ch^{-1}\left(\sum_v\|\mathcal P_v\mathbf w\|_{\omega_v}^2\right)^{1/2}\|\eta_h\|.
 \end{aligned}
\end{equation}
Let $r_h\in S_h^{k+1}$ be the $L^2$ Riesz representative of the residual
$\eta_h\mapsto a(\phi_h,\eta_h)-(f_0,\eta_h)$, so that
\begin{equation}\label{eq:app_riesz_representation}
 (r_h,\eta_h)=a(\phi_h,\eta_h)-(f_0,\eta_h),
 \qquad \forall\eta_h\in S_h^{k+1}.
\end{equation}
Taking $\eta_h=r_h$ and $\eta_h=1$ in
\eqref{eq:app_riesz_representation} and using
\eqref{eq:en_global_residual_estimate} yield
\begin{equation}\label{eq:en_residual_mean_bounds}
 \begin{aligned}
 \|r_h\|&\le Ch^{-1}\left(\sum_v\|\mathcal P_v\mathbf w\|_{\omega_v}^2\right)^{1/2},\\
 |\Omega|^{1/2}|m|&\le\|r_h\|,\qquad
 m=|\Omega|^{-1}\int_\Omega f_0\,\mathrm d\Omega.
 \end{aligned}
\end{equation}
Since $\int_\Omega r_h\,\mathrm d\Omega=-\int_\Omega f_0\,\mathrm d\Omega$, the functions
$f=f_0-m$ and $r=r_h+m$ have zero mean.
The mean-zero projection is nonexpansive in the $H^k$ seminorm, and
\eqref{eq:en_residual_mean_bounds} gives
\begin{equation}\label{eq:app_mean_projection_bounds}
 |f|_{H^k}\le |f_0|_{H^k},\qquad
 \|r\|\le \|r_h\|+|\Omega|^{1/2}|m|\le 2\|r_h\|.
\end{equation}
Together with~\eqref{eq:en_smooth_patch_budget} and
\eqref{eq:en_residual_mean_bounds}, \eqref{eq:app_mean_projection_bounds} yields
\eqref{eq:en_reconstructed_load_budget}.
\end{proof}

\begin{lemma}[Poisson--Ritz patch approximation]
\label{lem:en_poisson_approximation}
Let $f\in H^k(\Omega)$ and $r\in L^2(\Omega)$ be periodic,
zero-mean functions satisfying
$a(\phi_h,\eta_h)=(f+r,\eta_h)$ for every
$\eta_h\in S_h^{k+1}$.
If $\Phi$ is the zero-mean periodic solution of $-\Delta\Phi=f+r$,
then
\begin{equation}\label{eq:en_patch_approximation}
 \begin{aligned}
 \sum_v\inf_{\mathbf p\in\mathbf W^k(\omega_v)}
 \|\mathbf w-\mathbf p\|_{\omega_v}^2
 &\le C\left(
 |\phi_h-\Phi|_{H^1}^2
 +\sum_v\inf_{\pi\in\mathbb P^{k+1}(\mathcal O_v)}
 |\Phi-\pi|_{H^1(\omega_v)}^2\right)\\
 &\le C\bigl(h^{2k+2}|f|_{H^k}^2+h^2\|r\|^2\bigr),
 \end{aligned}
\end{equation}
where $\mathcal O_v=B_{2h}(v)\supset\omega_v$.
\end{lemma}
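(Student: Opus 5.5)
The plan has two parts: one bound for each of the two inequalities in \eqref{eq:en_patch_approximation}.

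For the first inequality, I use the potential representation \eqref{eq:compatible_potential}, $\mathbf w=\overline{\mathbf w}+\hat\nabla\times\phi_h$. For any $\pi\in\mathbb P^{k+1}(\mathcal O_v)$, the field $\mathbf p=\overline{\mathbf w}+\hat\nabla\times\pi$ is a single divergence-free polynomial of degree at most $k$ on $\omega_v$, so it lies in $\mathbf W^k(\omega_v)$. Since $|\hat\nabla\times\psi|=|\nabla\psi|$ pointwise, this choice gives
\[
\inf_{\mathbf p\in\mathbf W^k(\omega_v)}\|\mathbf w-\mathbf p\|_{\omega_v}\le|\phi_h-\pi|_{H^1(\omega_v)}\le|\phi_h-\Phi|_{H^1(\omega_v)}+|\Phi-\pi|_{H^1(\omega_v)}.
\]
I square this and sum over $v$. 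Each cell belongs to only boundedly many patches, so the sum of $|\phi_h-\Phi|_{H^1(\omega_v)}^2$ is at most $C|\phi_h-\Phi|_{H^1}^2$. This proves the first inequality.

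For the second inequality, the patch term comes first. By Bramble--Hilbert on the ball $\mathcal O_v$, $\inf_\pi|\Phi-\pi|_{H^1(\mathcal O_v)}\le Ch^{k+1}|\Phi|_{H^{k+2}(\mathcal O_v)}$, and the balls $\mathcal O_v$ have bounded overlap. This would need $\Phi\in H^{k+2}$, which fails for the $r$-part, so I split $\Phi=\Phi_f+\Phi_r$ with $-\Delta\Phi_f=f$ and $-\Delta\Phi_r=r$, both zero-mean and periodic. Periodic elliptic regularity (Fourier) gives $|\Phi_f|_{H^{k+2}}\le C|f|_{H^k}$, so the $\Phi_f$ contribution is at most $Ch^{2k+2}|f|_{H^k}^2$. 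For $\Phi_r$ I take $\pi=$ the patch mean, i.e.\ $\pi$ constant, which bounds its contribution by $C|\Phi_r|_{H^1}^2$. Then $H^2$ regularity and Poincaré give $|\Phi_r|_{H^1}\le C\|r\|$, which is not enough. Instead I use $\pi$ linear: Bramble--Hilbert with $m=1$ gives $\sum_v\inf_\pi|\Phi_r-\pi|^2_{H^1(\omega_v)}\le Ch^2|\Phi_r|_{H^2}^2\le Ch^2\|r\|^2$.

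For the global term $|\phi_h-\Phi|_{H^1}$, I note that $\phi_h$ is exactly the Galerkin (Ritz) projection of $\Phi$ onto $S_h^{k+1}$ among zero-mean functions. Indeed $a(\Phi,\eta_h)=(f+r,\eta_h)=a(\phi_h,\eta_h)$ for all $\eta_h$, where the second equality is the hypothesis and testing with constants is consistent because $f+r$ has zero mean. Céa's lemma then gives $|\phi_h-\Phi|_{H^1}\le\inf_{\chi\in S_h^{k+1}}|\Phi-\chi|_{H^1}$. Using the same split and the interpolant $I_h$ (or Scott--Zhang for $\Phi_r$), this is at most
\[
Ch^{k+1}|\Phi_f|_{H^{k+2}}+Ch|\Phi_r|_{H^2}\le C\bigl(h^{k+1}|f|_{H^k}+h\|r\|\bigr).
\]
Squaring and combining the two parts yields the second inequality.

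The main obstacle is making sure the estimates only use the stated regularity. Since $r\in L^2$ only, the $r$-part must always be handled with a first-order approximation at scale $h$, never the high-order one. I also need the periodic regularity estimates $|\Phi_f|_{H^{k+2}}\le C|f|_{H^k}$ and $|\Phi_r|_{H^2}\le C\|r\|$ on the torus; both follow from Fourier multipliers because the means vanish. A further point to check is that the balls $\mathcal O_v$ embed without periodic self-overlap, which is guaranteed by the standing small-$h$ assumption.
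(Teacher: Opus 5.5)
Your proposal is correct and matches the paper's proof essentially step for step. For the first inequality you use the same test field $\mathbf p=\overline{\mathbf w}+\hat\nabla\times\pi$ with the triangle inequality and bounded patch overlap. For the second you use the same splitting $\Phi=\Phi_f+\Phi_r$ with periodic elliptic regularity, Galerkin orthogonality (C\'ea) plus conforming approximation for $|\phi_h-\Phi|_{H^1}$, and local Bramble--Hilbert with $\pi_{f,v}\in\mathbb P^{k+1}(\mathcal O_v)$ and $\pi_{r,v}\in\mathbb P^1(\mathcal O_v)$ for the patch term. The brief detour through a constant $\pi$ for $\Phi_r$ is unnecessary but harmless, since you immediately switch to the linear choice the paper uses.
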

\begin{proof}
Write $\Phi=\Phi_f+\Phi_r$, where $\Phi_f$ and $\Phi_r$ are the zero-mean periodic Poisson solutions of $-\Delta\Phi_f=f$ and $-\Delta\Phi_r=r$, respectively. Periodic elliptic regularity and the zero-mean Poincar\'e inequality~\cite[Chaps.~3 and~5]{taylor2011partial} give
\begin{equation}\label{eq:app_poisson_regularity}
 |\Phi_f|_{H^{k+2}}\le C|f|_{H^k},\qquad
 |\Phi_r|_{H^2}\le C\|r\|.
\end{equation}
By the variational identity in the lemma and $-\Delta\Phi=f+r$,
$\phi_h$ is the Ritz projection of $\Phi$. Galerkin orthogonality therefore gives
$|\phi_h-\Phi|_{H^1}\le
\inf_{\eta_h\in S_h^{k+1}}|\Phi-\eta_h|_{H^1}$.
Conforming approximation~\cite[Chap.~22]{ern2021finite} and~\eqref{eq:app_poisson_regularity} yield
\begin{equation}\label{eq:app_ritz_bound}
|\phi_h-\Phi|_{H^1}^2\le C\bigl(
h^{2k+2}|\Phi_f|_{H^{k+2}}^2
+h^2|\Phi_r|_{H^2}^2
\bigr)\le C\bigl(
h^{2k+2}|f|_{H^k}^2+h^2\|r\|^2
\bigr).
\end{equation}
Local Bramble--Hilbert approximation~\cite{dupont1980polynomial}
gives $\pi_{f,v}\in\mathbb P^{k+1}(\mathcal O_v)$ and
$\pi_{r,v}\in\mathbb P^1(\mathcal O_v)$ such that
\begin{equation}\label{eq:app_local_bh}
\begin{aligned}
|\Phi_f-\pi_{f,v}|_{H^1(\mathcal O_v)}
\le Ch^{k+1}|\Phi_f|_{H^{k+2}(\mathcal O_v)},\quad 
|\Phi_r-\pi_{r,v}|_{H^1(\mathcal O_v)}\le Ch|\Phi_r|_{H^2(\mathcal O_v)}.
\end{aligned}
\end{equation}
Set $\pi_v=\pi_{f,v}+\pi_{r,v}$.
For any $\pi\in\mathbb P^{k+1}(\mathcal O_v)$, take
$\mathbf p_v=\overline{\mathbf w}+\hat\nabla\times\pi\in\mathbf W^k(\omega_v)$.
By~\eqref{eq:compatible_potential},
\begin{equation}\label{eq:app_vector_bound}
\|\mathbf w-\mathbf p_v\|_{\omega_v}
\le |\phi_h-\Phi|_{H^1(\omega_v)}
 +|\Phi-\pi|_{H^1(\omega_v)}.
\end{equation}
Taking infima in~\eqref{eq:app_vector_bound} and summing over $v$, with bounded patch overlap, gives the first
inequality in~\eqref{eq:en_patch_approximation}. Choosing $\pi=\pi_v$, \eqref{eq:app_local_bh} and
$\omega_v\subset\mathcal O_v$ give
\begin{equation}\label{eq:app_local_sum}
\begin{aligned}
\sum_v\inf_{\pi\in\mathbb P^{k+1}(\mathcal O_v)}
|\Phi-\pi|_{H^1(\omega_v)}^2\le{}&C\sum_v\left(
h^{2k+2}|\Phi_f|_{H^{k+2}(\mathcal O_v)}^2
+h^2|\Phi_r|_{H^2(\mathcal O_v)}^2
\right)\\
\le{}&C\left(
h^{2k+2}|\Phi_f|_{H^{k+2}}^2
+h^2|\Phi_r|_{H^2}^2
\right)\\
\le{}&C\left(
h^{2k+2}|f|_{H^k}^2+h^2\|r\|^2
\right).
\end{aligned}
\end{equation}
Together with~\eqref{eq:app_ritz_bound},
\eqref{eq:app_local_sum} yields~\eqref{eq:en_patch_approximation}.
\end{proof}

\begin{proof}[Proof of Lemma~\ref{lem:en_regularity}]
Polynomial consistency~\eqref{eq:scheme_polynomial_consistency}
and projection orthogonality give $
\|\mathcal P_v\mathbf w\|_{\omega_v}
\le\|\mathbf w-\mathbf p\|_{\omega_v},
\ \mathbf p\in\mathbf W^k(\omega_v)$. Taking the infimum and summing proves the first inequality. The reverse bound follows by combining
\eqref{eq:en_reconstructed_load_budget} and
\eqref{eq:en_patch_approximation}.
\end{proof}

\bibliographystyle{siamplain}
	\bibliography{references}
	
\end{document}

%% file: ex_shared.tex
\usepackage{lipsum}
\usepackage{amsfonts}
\usepackage{graphicx}
\usepackage{epstopdf}
\usepackage{algorithmic}
\ifpdf
  \DeclareGraphicsExtensions{.eps,.pdf,.png,.jpg}
\else
  \DeclareGraphicsExtensions{.eps}
\fi

\newsiamremark{remark}{Remark}
\newsiamremark{hypothesis}{Hypothesis}
\crefname{hypothesis}{Hypothesis}{Hypotheses}
\newsiamthm{claim}{Claim}
\newsiamthm{conj}{Conjecture}
\newsiamremark{Ex}{Example}

\headers{ENERGY STABILITY AND ERROR ESTIMATES FOR GDF DG}{}

\title{Energy Stability and Error Estimates for a Globally Divergence-Free DG Method of Arbitrary Order}

\author{Yuchang Liu\thanks{School of Mathematical Sciences,
         University of Science and Technology of China,
         Hefei, Anhui 230026, P.R. China.
  (\email{lissandra@mail.ustc.edu.cn}).}
\and Yan Jiang\thanks{ School of Mathematical Sciences,
         University of Science and Technology of China, Hefei, 
         Anhui 230026, P.R. China.  
         (\email{jiangy@ustc.edu.cn}).  }
\and Mengping Zhang\thanks{ School of Mathematical Sciences, University of Science and Technology of China, Hefei,
         Anhui 230026, P.R. China.  
         (\email{mpzhang@ustc.edu.cn}).} 
}

\usepackage{amsopn}